\documentclass[a4paper, 11pt, final]{article}

\usepackage{amsfonts,enumerate}
\usepackage{enumerate}
\usepackage{graphicx,psfrag}

\delimitershortfall=-0.1pt

\newtheorem{proposition}{Proposition}[section]
\newtheorem{theorem}[proposition]{Theorem}
\newtheorem{lemma}[proposition]{Lemma}
\newtheorem{corollary}[proposition]{Corollary}
\newtheorem{definition}[proposition]{Definition}
\newtheorem{remark}[proposition]{Remark}

\newenvironment{proof}{\smallskip\noindent\emph{Proof.}\hspace{1pt}}%
{\hspace{-5pt}{\nobreak\quad\nobreak\hfill\nobreak$\square$\vspace{8pt}%
\par}\smallskip\goodbreak}

\newenvironment{proofof}[1]{\smallskip\noindent\emph{Proof of #1.}%
\hspace{1pt}}{\hspace{-5pt}{\nobreak\quad\nobreak\hfill\nobreak%
$\square$\vspace{8pt}\par}\smallskip\goodbreak}

\newcommand{\Section}[1]{\section{#1}\setcounter{equation}{0}}

\newcommand{\C}[1]{\mathbf{C^{#1}}}

\newcommand{\Cc}[1]{\mathbf{C_c^{#1}}}

\newcommand{\reali}{{\mathbb{R}}}

\newcommand{\naturali}{{\mathbb{N}}}

\newcommand{\BV}{\mathbf{BV}}

\newcommand{\supp}{\mathop{\mathrm{supp}}}

\renewcommand{\epsilon}{\varepsilon}
\renewcommand{\phi}{\varphi}
\renewcommand{\theta}{\vartheta}
\renewcommand{\O}{{\mathcal O}(1)}
\renewcommand{\L}[1]{\mathbf{L^#1}}

\newcommand{\caratt}[1]{{\chi_{\strut#1}}}

\date{\null}
\begin{document}

\title{A $2$--Phase Traffic Model Based on a Speed Bound}

\author{Rinaldo M.~Colombo$^1$ \and Francesca Marcellini$^2$ \and
  Michel Rascle$^3$} \footnotetext[1]{Universit\`a degli Studi di
  Brescia, Via Branze 38, 25123 Brescia, Italy,
  \texttt{Rinaldo.Colombo@UniBs.it}} \footnotetext[2]{Universit\`{a}
  di Milano--Bicocca, Via Bicocca degli Arcimboldi 8, 20126 Milano,
  Italy, \texttt{F.Marcellini@Campus.Unimib.it}}
\footnotetext[3]{Laboratoire de Mathematiques, U.M.R. C.N.R.S. 6621,
  Universit\`e de Nice, Parc Valrose B.P. 71, F06108 Nice Cedex,
  France, \texttt{Rascle@Math.Unice.fr}}

\maketitle

\begin{abstract}

  \noindent We extend the classical LWR traffic model allowing
  different maximal speeds to different vehicles. Then, we add a
  uniform bound on the traffic speed. The result, presented in this
  paper, is a new macroscopic model displaying $2$ phases, based on a
  non-smooth $2\times 2$ system of conservation laws.  This model is
  compared with other models of the same type in the current
  literature, as well as with a kinetic one. Moreover, we establish a
  rigorous connection between a \textit{microscopic Follow-The-Leader}
  model based on ordinary differential equations and this
  \textit{macroscopic continuum} model.  \medskip

  \noindent\textit{2000~Mathematics Subject Classification:} 35L65,
  90B20

  \medskip

  \noindent\textit{Keywords and phrases:} Continuum Traffic Models,
  2-Phase Traffic Models, Second Order Traffic Models

\end{abstract}

\Section{Introduction}
\label{sec:Intro}

Several observations of traffic flow result in underlining two
different behaviors, sometimes called \emph{phases},
see~\cite{Colombo, Edie, Goatin2Phases, Kerner1}. At low density and
high speed, the flow appears to be reasonably described by a function
of the (mean) traffic density. On the contrary, at high density and
low speed, flow is not a single valued function of the density.
\begin{figure}[htpb]
  \centering
  \includegraphics[width=6cm]{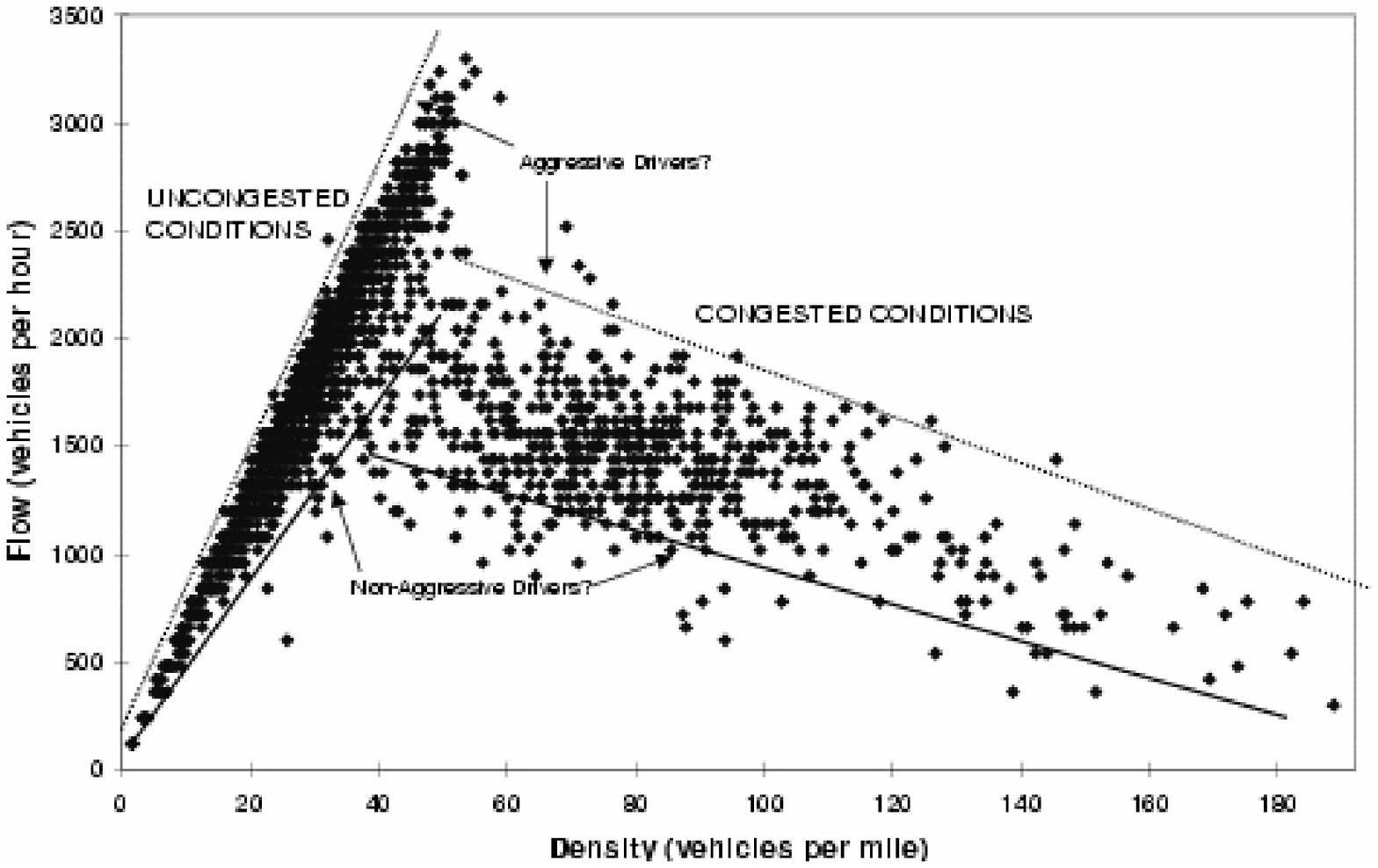}\quad
  \includegraphics[width=6cm]{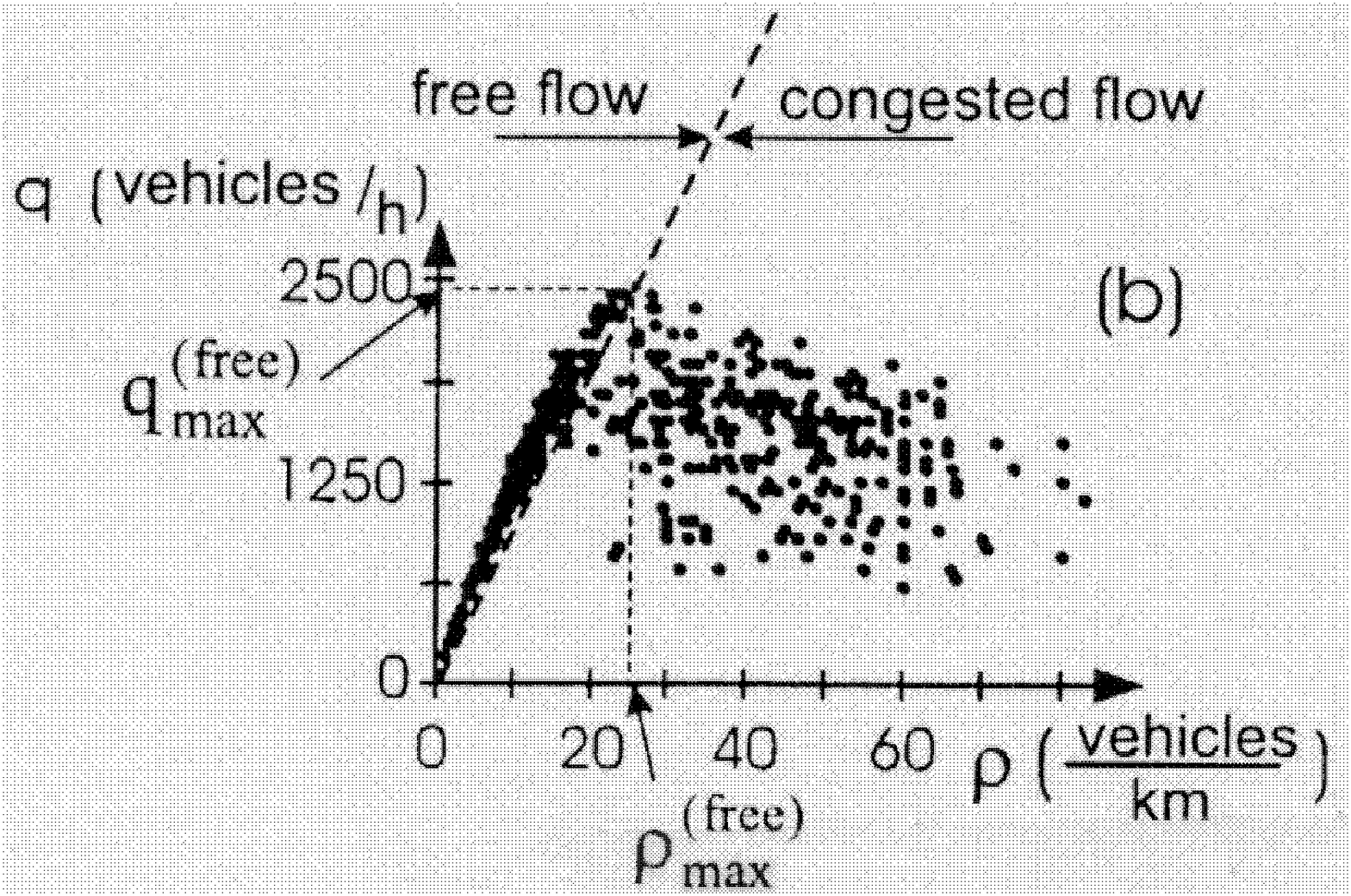}
  \caption{Experimental fundamental diagrams. Left,
    \cite[Figure~1]{Kockelman} and, right, \cite[Figure~1]{Kerner1},
    (see also~\cite{HelbingTreiber}).}
  \label{fig:Ex}
\end{figure}
This paper presents a model providing an explanation to this
phenomenon, its two key features being:
\begin{enumerate}
\item At a given density, different drivers may choose different
  velocities;
\item There exists a uniform bound on the speed.
\end{enumerate}
\noindent By \emph{``bound''}, here we do not necessarily mean an
official speed limit. On the contrary, we assume that different
drivers may have different speeds at the same traffic
density. Nevertheless, there exists a speed $V_{\max}$ that no driver
exceeds. As a result from this postulate, we obtain a fundamental
diagram very similar to those usually observed,
see~Figure~\ref{fig:Ex} and Figure~\ref{fig:phases}, left. Besides,
the evolution prescribed by the model so obtained is reasonable and
coherent with that of other traffic models in the literature. In
particular, we verify that the minimal requirements stated
in~\cite{AwRascle, Daganzo1} are satisfied.

Recall the classical Lighthill-Whitham~\cite{LighthillWhitham} and
Richards~\cite{richards} (LWR) model
\begin{equation}
  \label{eq:LWR}
  \partial_t \rho + \partial_x \left( \rho \, V \right) =0
\end{equation}
for the traffic density $\rho$. Assume that the speed $V$ is not the
same for all drivers. More precisely, different drivers differ in
their \emph{maximal} speed $w$, so that $V = w \, \psi(\rho)$, with
$w\in\left[\check w, \hat w \right],\check w>0 $, being transported
along the road at the mean traffic speed $V$. We identify the
different behaviors of the different drivers by means of their maximal
speed, see also~\cite{BenzoniColombo1,BenzoniColomboGwiazda}. One is
thus lead to study the equations
\begin{equation}
  \label{eq:NonCons}
  \left\{
    \begin{array}{l}
      \partial_t \rho + \partial_x ( \rho v ) =0
      \\
      \partial_t w + v \, \partial_x w =0
    \end{array}
  \right.
  \qquad \mbox{ with } \qquad
  v=w \, \psi(\rho) \,.
\end{equation}
Here, the role of the second equation is to let the maximal velocity
$w$ be propagated with the traffic speed. Indeed, $w$ is a specific
feature of every single driver, 
in other words is a Lagrangian marker.  Therefore this model falls
into the class of models introduced in \cite{AwRascle}, and later on
extended in \cite{LebacqueMammarHajSalem}, see also
\cite[formula~(1.2)]{BagneriniRascle2003}. 

Introducing a uniform bound $V_{\max}$ on the speed, we obtain the
model
\begin{equation}
  \label{eq:rhow}
  \left\{
    \begin{array}{l}
      \partial_t \rho + \partial_x ( \rho v ) =0
      \\
      \partial_t w + v \, \partial_x w =0
    \end{array}
  \right.
  \qquad \mbox{ with } \qquad
  v = \min \left\{V_{\max} ,\, w \, \psi(\rho) \right\}\,.
\end{equation}
We choose to reformulate the above quasilinear system in conservation
form, similarly to~\cite[formula~(1)]{KeyfitzKranzer},
\cite[formula~(2.2)]{BagneriniColomboCorli},
\cite[formula~(1)]{LebacqueMammarHajSalem}, see also~\cite{Temple}, as
follows:
\begin{equation}
  \label{eq:Modeleta}
  \left\{
    \begin{array}{l}
      \partial_t \rho +
      \partial_x \left( \rho\, v (\rho,\eta) \right) = 0
      \\
      \partial_t \eta +
      \partial_x \left( \eta\, v (\rho, \eta) \right) = 0
    \end{array}
  \right.
  \quad \mbox{ with } \quad
  v(\rho, \eta)
  =
  \min \left\{ V_{\max}, \frac{\eta}{\rho}\, \psi(\rho) \right\}
\end{equation}
see the Remark~\ref{rem:appendix} for further comments on this
choice. This model consists of a $2\times 2$ system of conservation
laws with a $\C{0,1}$ but not $\C1$ flow. Note in fact ÃƒÆ’Ã‚Â¹that
$\eta/\rho = w \in [\check w, \hat w]$.  A $2\times 2$ system of
conservation laws with a flow having a similar $\C{0,1}$ regularity is
presented in~\cite{HadelerKuttler} and studied in~\cite{AmadoriShen}.

\par From the traffic point of view, we remark that, under mild
reasonable assumptions on the function $\psi$, the flow
in~(\ref{eq:Modeleta}) may vanish if and only if $\rho = 0$, i.e~the
road is empty, or $\rho = R$, i.e.~the road is fully congested. It is
also worth noting the agreement between experimental fundamental
diagrams often found in the literature and the one related
to~(\ref{eq:Modeleta}), see Figure~\ref{fig:phases}, left.

\par From the analytical point of view, we can extend the present
treatment to the more general case of a maximal speed $V_{\max}$ that
depends on $\rho$, i.e.~$V_{\max} = V_{\max}(\rho)$. However, we
prefer to highlight the main features of the model~(\ref{eq:Modeleta})
in its simplest analytical framework.

As we already said, the model studied here, inspired
from~\cite{Colombo}, falls into the class of ``Aw-Rascle'' models. So
we could use the approach and the theoretical results
of~\cite{AwKlarMaterneRascle}, which should apply here with minor
modifications.

However, our approach is different: here, contrarily to the above
reference, we establish \emph{directly} a connection between the
Follow-The-Leader model in Section~\ref{sec:Micro} and the macroscopic
system~(\ref{eq:Modeleta}), {\em without} viewing both systems as
issued from a same fully discrete system (Godunov scheme) with
different limits, and {\em without} passing in Lagrangian
coordinates. For related works, including vacuum, see also~\cite{Aw,
  GodvikHanche2, GodvikHanche}.  

The present paper is arranged in the following way: in the next
section we study the Riemann Problem for~(\ref{eq:Modeleta}) and
present the qualitative properties of this model from the point of
view of traffic. In Section~\ref{sec:Comp} we compare the present
model with others in the current literature and in
Section~\ref{sec:Micro} we establish the connection with a microscopic
Follow-The-Leader model based on ordinary differential equations. We
also show \textit{rigorously} that the macroscopic
model~(\ref{eq:Modeleta}) can be viewed as the limit of the
microscopic model as the number of vehicles increases to infinity. All
proofs are gathered in the last section.

\Section{Notation and Main Results}
\label{sec:RP}

We assume throughout the following hypotheses:
\begin{enumerate}[{\bf a.}]
\item \label{it:h1} $R,\check w, \hat w, V_{\max}$ are positive
  constants, with $\check w < \hat w$.
\item \label{it:h2} $\psi \in \C{2} \left( [0,R];[0,1]\right)$ is such
  that
  \begin{displaymath}
    \begin{array}{rcl@{\qquad}rcl}
      \psi(0) & = & 1,
      &
      \psi(R) & = & 0,
      \\
      \psi'(\rho) & \leq & 0,
      &
      \displaystyle
      \frac{d^2\ }{d\rho^2} \left( \rho\, \psi(\rho) \right) & \leq &
      0
      \quad \mbox{ for all } \rho \in [0, R]\,.
    \end{array}
  \end{displaymath}
\item \label{it:h3} $\check w > V_{\max}$.
\end{enumerate}
\noindent Here, $R$ is the maximal possible density, typically $R=1$
if $\rho$ is normalized as in Section~\ref{sec:Micro}; $\check w$,
respectively $\hat w$, is the minimum, respectively maximum, of the
maximal speeds of each vehicle; $V_{\max}$ is the overall uniform
upper bound on the traffic speed. At \textbf{\ref{it:h2}.}, the first
three assumptions on $\Psi$ are the classical conditions usually
assumed on speed laws, while the fourth one is technically necessary
in the proof of Theorem~\ref{thm:RP}. The latter condition means that
all drivers do feel the presence of the speed limit.

Moreover, we introduce the notation
\begin{eqnarray}
  \label{eq:phF}
  F
  & = &
  \left\{
    (\rho,w) \in [0,R] \times [\check w, \hat w]
    \colon v(\rho, \rho w) = V_{\max}
  \right\}
  \\
  \label{eq:phC}
  C
  & = &
  \left\{
    (\rho,w) \in [0,R] \times [\check w, \hat w]
    \colon v(\rho, \rho w) = w \, \psi(\rho)
  \right\}
\end{eqnarray}
to denote the \textit{Free} and the \textit{Congested} phases. Note
that $F$ and $C$ are closed sets and $F\cap C \neq \emptyset$.
\begin{figure}[htpb]
  \centering
  \begin{psfrags}
    \psfrag{0}{$0$} \psfrag{rv}{$\rho v$} \psfrag{r}{$\rho$}
    \psfrag{R}{$R$} \psfrag{F}{$F$} \psfrag{C}{$C$}
    \includegraphics[width=3.8cm]{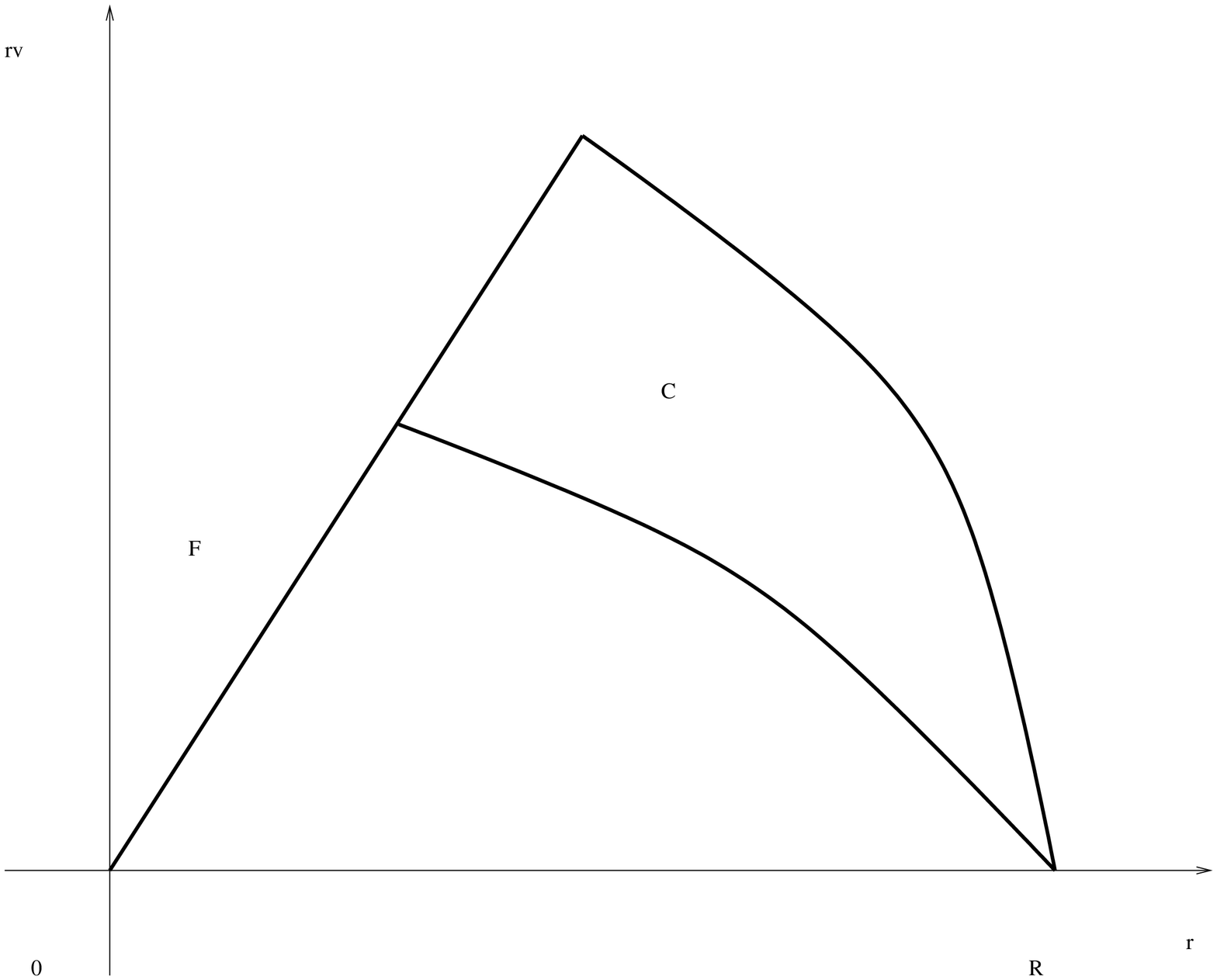}
  \end{psfrags}
  \hfil
  \begin{psfrags}
    \psfrag{0}{$0$} \psfrag{rv}{$w$} \psfrag{r}{$\rho$}
    \psfrag{R}{$R$} \psfrag{F}{$F$} \psfrag{C}{$C$}
    \psfrag{V}{$\!\!\!\!\!\!\!\!\!V_{\max}$} \psfrag{ws}{$\hat w$}
    \psfrag{wg}{$\check w$}
    \includegraphics[width=3.8cm]{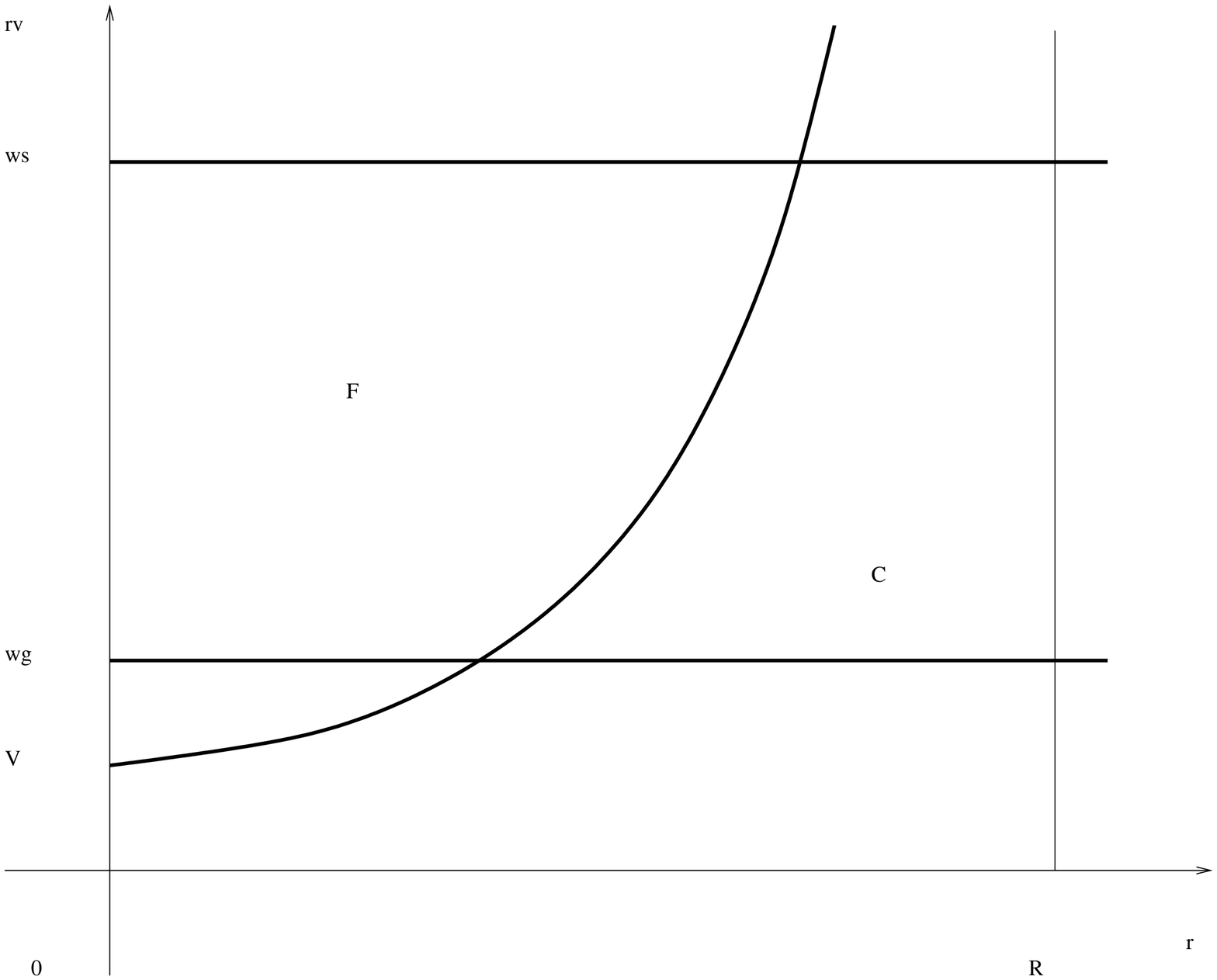}
  \end{psfrags}
  \hfil
  \begin{psfrags}
    \psfrag{0}{$0$} \psfrag{rv}{$\eta$} \psfrag{r}{$\rho$}
    \psfrag{R}{$R$} \psfrag{F}{$F$} \psfrag{C}{$C$}
    \includegraphics[width=3.8cm]{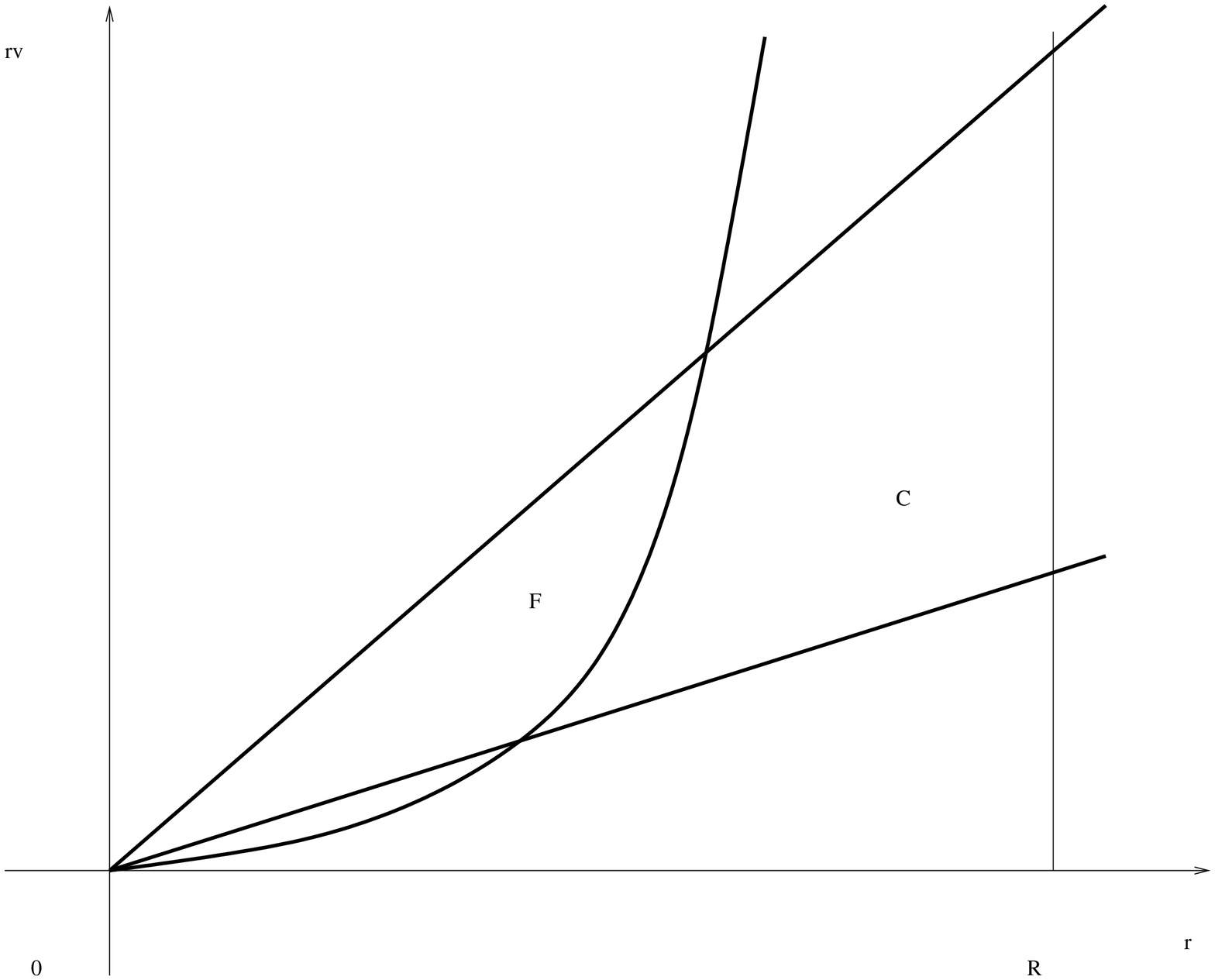}
  \end{psfrags}
  \caption{The phases $F$ and $C$ in the coordinates, from left to
    right, $(\rho, \rho v)$, $(\rho, w)$ and $(\rho, \eta)$.}
  \label{fig:phases}
\end{figure}
Note also that $F$ is $1$--dimensional in the $(\rho, \rho v)$ plane
of the fundamental diagram, while it is $2$--dimensional in the
$(\rho,w)$ and $(\rho,\eta)$ coordinates,
see~Figure~\ref{fig:phases}. See also~Figure~\ref{fig:phase} to have a
vision in three dimensions.
\begin{figure}[htpb]
  \centering
  \begin{psfrags}
    \psfrag{r}{$\rho$} \psfrag{g}{$\rho v$} \psfrag{w}{$w$}
    \psfrag{k}{$u^{l}$} \psfrag{j}{$u^{m}$} \psfrag{l}{$u^{r}$}
    \psfrag{f}{$F$} \psfrag{c}{$C$} \psfrag{b}{$\hat
      w$}\psfrag{e}{$\check w$} \psfrag{z}{$R$}
    \includegraphics[width=8cm]{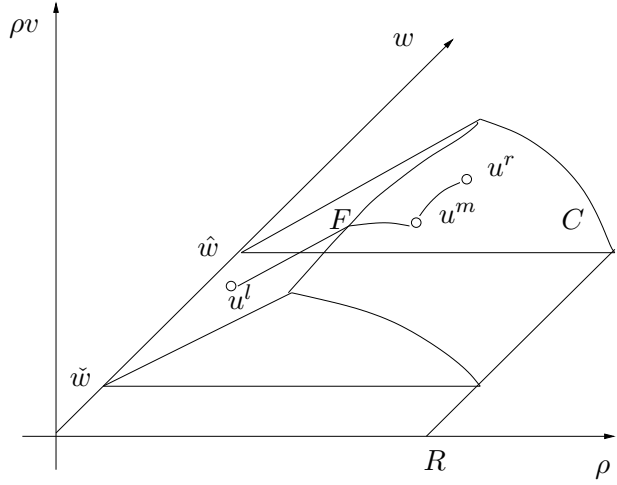}
  \end{psfrags}
  \caption{The phases $F$ and $C$ in the coordinates $(\rho, \rho v,
    w)$. Note that $F$ is contained in a plane. This figure shows an
    example of Riemann Problem when $u^{l}=(\rho^{l}, \rho^{l} v^{l},
    w^{l})\in F$ and $u^{r}=(\rho^{r}, \rho^{r} v^{r}, w^{r})\in C$.}
  \label{fig:phase}
\end{figure}

Let $\rho_*$ be the maximum of the points of maximum of the flow,
i.e.~$\rho_* = \max \left\{ \rho \in [0,R] \colon \rho \, \psi(\rho) =
  \max_{r \in [0,R]} r \, \psi(r) \right\}$. Then, the condition
\begin{equation}
  \label{eq:CapacityDrop}
  \hat w \psi\left(\rho_{*}\right) \geq V_{\max}
\end{equation}
is a further reasonable assumption. Indeed, it means that the maximum
flow is attained in the free phase, coherently with the \emph{capacity
  drop} phenomenon, see for
instance~\cite{TreiberKestingHelbing}. However,
(\ref{eq:CapacityDrop}) is not necessary in the following results.

\medskip

Our next goal is to study the Riemann Problem for~(\ref{eq:Modeleta}).

\begin{theorem}
  \label{thm:RP}
  Under the assumptions~\textbf{\ref{it:h1}.}, \textbf{\ref{it:h2}.}
  and~\textbf{\ref{it:h3}.}, for all states $(\rho^l,\eta^l)$,
  $(\rho^r, \eta^r) \in F \cup C$, the Riemann problem consisting
  of~(\ref{eq:Modeleta}) with initial data
  \begin{equation}
    \label{eq:RD}
    \rho(0,x) = \left\{
      \begin{array}{l@{\quad\mbox{ if }}rcl}
        \rho^l & x & < & 0
        \\
        \rho^r & x & > & 0
      \end{array}
    \right.
    \qquad
    \eta(0,x) = \left\{
      \begin{array}{l@{\quad\mbox{ if }}rcl}
        \eta^l & x & < & 0
        \\
        \eta^r & x & > & 0
      \end{array}
    \right.
  \end{equation}
  admits a unique self similar weak solution $(\rho,\eta) =
  (\rho,\eta) (t,x)$ constructed as follows:
  \begin{enumerate}[(1)]
  \item If $(\rho^l,\eta^l), (\rho^r,\eta^r) \in F$, then
    \begin{equation}
      \label{eq:solF}
      (\rho, \eta) (t,x) =
      \left\{
        \begin{array}{l@{\quad\mbox{ if }\quad}rcl}
          (\rho^l, \eta^l) & x & < & V_{\max}t
          \\
          (\rho^r, \eta^r) & x & > & V_{\max}t \,.
        \end{array}
      \right.
    \end{equation}
  \item If $(\rho^l,\eta^l), (\rho^r,\eta^r) \in C$, then
    $(\rho,\eta)$ consists of a $1$--Lax wave (shock or rarefaction)
    between $(\rho^l, \eta^l)$ and $(\rho^m, \eta^m)$, followed by a
    $2$--contact discontinuity between $(\rho^m, \eta^m)$ and
    $(\rho^r, \eta^r)$. The middle state $(\rho^m, \eta^m)$ is in $C$
    and is uniquely characterized by the two conditions $\eta^m/\rho^m
    = \eta^l / \rho^l$ and $v(\rho^m, \eta^m) = v(\rho^r, \eta^r)$.
  \item If $(\rho^l,\eta^l) \in C$ and $(\rho^r,\eta^r) \in F$, then
    the solution $(\rho,\eta)$ consists of a rarefaction wave
    separating $(\rho^r, \eta^r)$ from a state $(\rho^m, \eta^m)$ and
    by a linear wave separating $(\rho^m, \eta^m)$ from $(\rho^l,
    \eta^l)$. The middle state $(\rho^m, \eta^m)$ is in $F \cap C$ and
    is uniquely characterized by the two conditions $\eta^m/\rho^m =
    \eta^r / \rho^r$ and $v(\rho^m, \eta^m) = V$.
  \item If $(\rho^l,\eta^l) \in F$ and $(\rho^r,\eta^r) \in C$, then
    $(\rho, \eta)$ consists of a shock between $(\rho^l, \eta^l)$ and
    $(\rho^m, \eta^m)$, followed by a contact discontinuity between
    $(\rho^m, \eta^m)$ and $(\rho^r, \eta^r)$. The middle state
    $(\rho^m, \eta^m)$ is in $C$ and is uniquely characterized by the
    two conditions $\eta^m/\rho^m = \eta^l / \rho^l$ and $v(\rho^m,
    \eta^m) = v(\rho^r, \eta^r)$.
  \end{enumerate}
\end{theorem}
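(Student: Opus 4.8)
The plan is to analyze the flux separately in the two phases, reduce the construction of the $1$--waves to a scalar conservation law with concave flux, and then glue a $1$--wave issued from the left datum to a $2$--contact reaching the right datum, treating the phase transition by hand.

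First I would settle the eigenstructure. In the congested region the system reads $\partial_t\rho+\partial_x\!\left(\eta\,\psi(\rho)\right)=0$, $\partial_t\eta+\partial_x\!\left(\frac{\eta^2}{\rho}\psi(\rho)\right)=0$, and computing the flux Jacobian gives the eigenvalues $\lambda_1=\frac{\eta}{\rho}\,(\rho\,\psi)'$ and $\lambda_2=v=\frac{\eta}{\rho}\,\psi(\rho)$, with $\lambda_1\le\lambda_2$. One reads off that $w=\eta/\rho$ is a $1$--Riemann invariant and $v$ a $2$--Riemann invariant, that $\nabla\lambda_1\cdot r_1$ is a positive multiple of $(\rho\,\psi)''\le0$ (so the $1$--field is genuinely nonlinear, which is exactly where the concavity assumption on $\rho\,\psi$ is used), and that $\lambda_2=v$ coincides with the $2$--invariant, so the $2$--field is linearly degenerate and $2$--waves are contacts travelling at speed $v$. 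In the free region $v\equiv V_{\max}$, the flux is linear with Jacobian $V_{\max}\,\Id$: both characteristic speeds equal $V_{\max}$ and any discontinuity between states of $F$ satisfies Rankine--Hugoniot with speed $V_{\max}$.

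Next I would exploit the following reduction. Along a $1$--wave $w$ is frozen, so $\rho$ obeys the scalar law $\partial_t\rho+\partial_x q_w(\rho)=0$ with $q_w(\rho)=\rho\,v(\rho,\rho w)$; this flux equals $\rho\,V_{\max}$ on $F$ and $w\,\rho\,\psi(\rho)$ on $C$, hence is globally concave on $[0,R]$ with non-increasing slope, the slope dropping from $V_{\max}$ to $w\,(\rho\,\psi)'<V_{\max}$ as one crosses $F\cap C$. The admissible $1$--wave is then the Lax/Oleinik solution of this scalar law: a rarefaction where the density decreases, a shock where it increases. Along a $2$--wave $v$ is frozen. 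Consequently, in every configuration the solution is a $1$--wave from $(\rho^l,\eta^l)$ to a middle state $(\rho^m,\eta^m)$ with $\eta^m/\rho^m=\eta^l/\rho^l$, followed by a $2$--contact from $(\rho^m,\eta^m)$ to $(\rho^r,\eta^r)$ with $v(\rho^m,\eta^m)=v(\rho^r,\eta^r)$. The middle state is found uniquely from these two relations, using $\check w>V_{\max}\ge v^r$ to guarantee solvability of $w^l\,\psi(\rho^m)=v^r$ and the monotonicity of $\psi$ for uniqueness; whether $(\rho^m,\eta^m)$ lies in $C$ or on $F\cap C$ is dictated by $v^r<V_{\max}$ or $v^r=V_{\max}$. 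This single recipe reproduces the four listed cases: a degenerate $1$--wave plus a $V_{\max}$--contact when both data are in $F$; a $1$--Lax wave plus a $2$--contact inside $C$; and, when the data straddle the two phases, a rarefaction (because $v^r=V_{\max}>v^l$) or a shock (because $v^r<V_{\max}=v^l$) followed by the remaining contact.

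The crux is the $1$--wave that crosses $F\cap C$, where the flux is merely $\C{0,1}$ and $\lambda_1$ is discontinuous, so the smooth Riemann machinery does not apply and admissibility must be checked by hand. Here the concavity of $q_w$ does the work: for a shock it yields the Lax inequalities $\lambda_1(u^m)\le s\le\lambda_1(u^l)$ at once, and a one-line computation reduces the ordering $s\le v^m$ to $v^m\le V_{\max}$, so the shock always precedes the contact; for a rarefaction, its largest speed $\lambda_1(u^m)=v^m+w^l\rho^m\psi'(\rho^m)\le v^m$ again precedes the contact. Moreover, across any wave on which $w$ is constant the two scalar Rankine--Hugoniot relations are proportional (the $\eta$--relation is $w$ times the $\rho$--relation), so the phase-transition jump is legitimately treated as a scalar shock. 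I expect this verification of admissibility and of the strict ordering of speeds across the non-smooth interface to be the main obstacle; once it is in place, gluing the two self-similar pieces gives a self-similar weak solution, and uniqueness within this class follows because the monotone $1$-- and $2$--wave curves meet in a single point and the Oleinik condition singles out the shock branch.
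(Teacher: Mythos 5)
Your construction is correct, and it reaches the solution by a genuinely different route from the paper's. The paper argues phase by phase: for data in $C$, and for the transition $F\to C$, it computes the eigenstructure of the congested system~\Ref{eq:C}, notes that shock and rarefaction curves of the first family coincide and that the second field is linearly degenerate (a Temple system), gets invariance of $C$ and of $F\cup C$ from Hoff's theorem, and then simply declares the solution to be ``the standard Lax solution to~\Ref{eq:C}''; for $C\to F$ it builds a middle state on $F\cap C$ by hand and juxtaposes a rarefaction with a linear wave. Your reduction of the $1$--family to the scalar law $\partial_t\rho+\partial_x q_w(\rho)=0$ with the concave $\C{0,1}$ flux $q_w(\rho)=\rho\min\{V_{\max},w\,\psi(\rho)\}$ does not appear in the paper, and it buys something real: in case (4), with $(\rho^l,\eta^l)$ in the interior of $F$, the flux $\eta\,\psi(\rho)$ of~\Ref{eq:C} differs from the flux $\rho\,v(\rho,\eta)$ of~\Ref{eq:Modeleta}, so the Rankine--Hugoniot speed of the ``Lax solution to~\Ref{eq:C}'' is not the correct one for~\Ref{eq:Modeleta} across the phase-transition shock; your $q_w$ is exactly the flux of~\Ref{eq:Modeleta} restricted to $\{w=\mathrm{const}\}$, so the Oleinik construction delivers the right shock speed, the Lax inequalities, and the ordering $s\le v^m$ in one stroke. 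What the paper's route gives for free, and what you should still record explicitly, is the invariance of $F$, $C$ and $F\cup C$ (it is reused in Proposition~\ref{prop:Q}); in your setting it is immediate, since $w^m=w^l\in[\check w,\hat w]$ and $\rho^m\in[0,R]$.

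Two points to settle before writing this up. First, in the case $(\rho^l,\eta^l)\in C$, $(\rho^r,\eta^r)\in F$ your middle state is the endpoint of the $1$--rarefaction issuing from the \emph{left} datum, hence $\eta^m/\rho^m=\eta^l/\rho^l$, whereas statement (3) of the theorem asserts $\eta^m/\rho^m=\eta^r/\rho^r$. Your version is the internally consistent one (a $1$--wave preserves $w$, and the paper's own proof of this case also claims that $(\rho^l,\eta^l)$ and $(\rho^m,\eta^m)$ are joined by a $1$--rarefaction, which forces $w^m=w^l$); still, you are proving a statement whose case (3) is worded differently from the printed one, and you must say so rather than pass over it. Second, genuine nonlinearity of the first field requires $\frac{d^2}{d\rho^2}(\rho\psi)<0$ strictly, while hypothesis~\textbf{\ref{it:h2}.} only gives $\le 0$; your scalar/Oleinik argument survives without strict concavity, but the claim ``the $1$--field is genuinely nonlinear'' should be weakened to match the parenthetical remark following the theorem.
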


\noindent (If $\frac{d^2}{d\rho^2}\left(\rho\,\psi(\rho)\right)$
vanishes, then the words \emph{``shock''} and \emph{``rarefaction''}
above have to be understood as \emph{``contact discontinuities'').}

We now pass from the solution to single Riemann problems to the
properties of the \emph{Riemann Solver}, i.e.~of the map $\mathcal{R}
\colon (F \cup C)^2 \to \BV(\reali; C \cup F)$ such that $\mathcal{R}
\left( (\rho^l, \eta^l), (\rho^r, \eta^r) \right)$ is the solution
to~(\ref{eq:Modeleta})--(\ref{eq:RD}) computed at time, say, $t=1$.

To this aim, recall the following definition, see~\cite{Colombo}:

\begin{definition}
  \label{def:consistent}
  A Riemann Solver $\mathcal{R}$ is \emph{consistent} if the following
  two conditions hold for all $(\rho^l,\eta^l)$, $(\rho^m,\eta^m)$,
  $(\rho^r,\eta^r) \in F \cup C$, and $\bar x \in \reali$:
  \begin{description}
  \item[(C1)] If $\mathcal{R} \left( (\rho^l,\eta^l),(\rho^m,\eta^m)
    \right)(\bar x) = (\rho^m,\eta^m)$ and $\mathcal{R} \left(
      (\rho^m,\eta^m),(\rho^r,\eta^r) \right)(\bar x)$ $ =
    (\rho^m,\eta^m)$, then
    \begin{displaymath}
      \mathcal{R} \left( (\rho^l,\eta^l),(\rho^r,\eta^r) \right) ~ = \left\{\!\!
        \begin{array}{l@{}l}
          \mathcal{R} \left( (\rho^l,\eta^l),(\rho^m,\eta^m) \right)\,,
          & \hbox{if } x < \bar x\,,
          \\
          \mathcal{R} \left( (\rho^m,\eta^m),(\rho^r,\eta^r) \right)\,,
          & \hbox{if } x \geq \bar x\,,
        \end{array}
      \right.
    \end{displaymath}
  \item[(C2)] If $\mathcal{R} \left( (\rho^l,\eta^l),(\rho^r,\eta^r)
    \right) (\bar x) = {(\rho^m,\eta^m)}$, then
    \begin{displaymath}
      \begin{array}{c}
        \mathcal{R} \left( (\rho^l,\eta^l),(\rho^m,\eta^m) \right) = \left\{\!\!
          \begin{array}{l@{}l}
            \mathcal{R} \left( (\rho^l,\eta^l),(\rho^r,\eta^r) \right)\,,
            & \hbox{ if } x\leq \bar x\,,
            \\
            (\rho^m,\eta^m) \,,& \hbox{ if } x > \bar x\,,
          \end{array}
        \right.
        \\
        \mathcal{R} \left( (\rho^m,\eta^m),(\rho^r,\eta^r) \right)
        =\! \left\{\!\!
          \begin{array}{l@{}l}
            (\rho^m,\eta^m)\,, & \hbox{ if } x < \bar x\,,
            \\
            \mathcal{R} \left( (\rho^l,\eta^l),(\rho^r,\eta^r) \right)\,,
            &
            \hbox{ if } x \geq \bar x\,.
          \end{array}
        \right.
      \end{array}
    \end{displaymath}
  \end{description}
\end{definition}
Essentially, \textbf{(C1)} states that whenever two solutions to two
Riemann problems can be placed side by side, then their juxtaposition
is again a solution to a Riemann problem.
\begin{figure}[htpb]
  \centering
  \begin{psfrags}
    \psfrag{t}{$t$} \psfrag{x}{$x$}
    \psfrag{ul}{$\scriptstyle(\rho^l,\eta^l)$}
    \psfrag{um}{$\!\!\!\scriptstyle(\rho^m,\eta^m)$}
    \includegraphics[width=4cm]{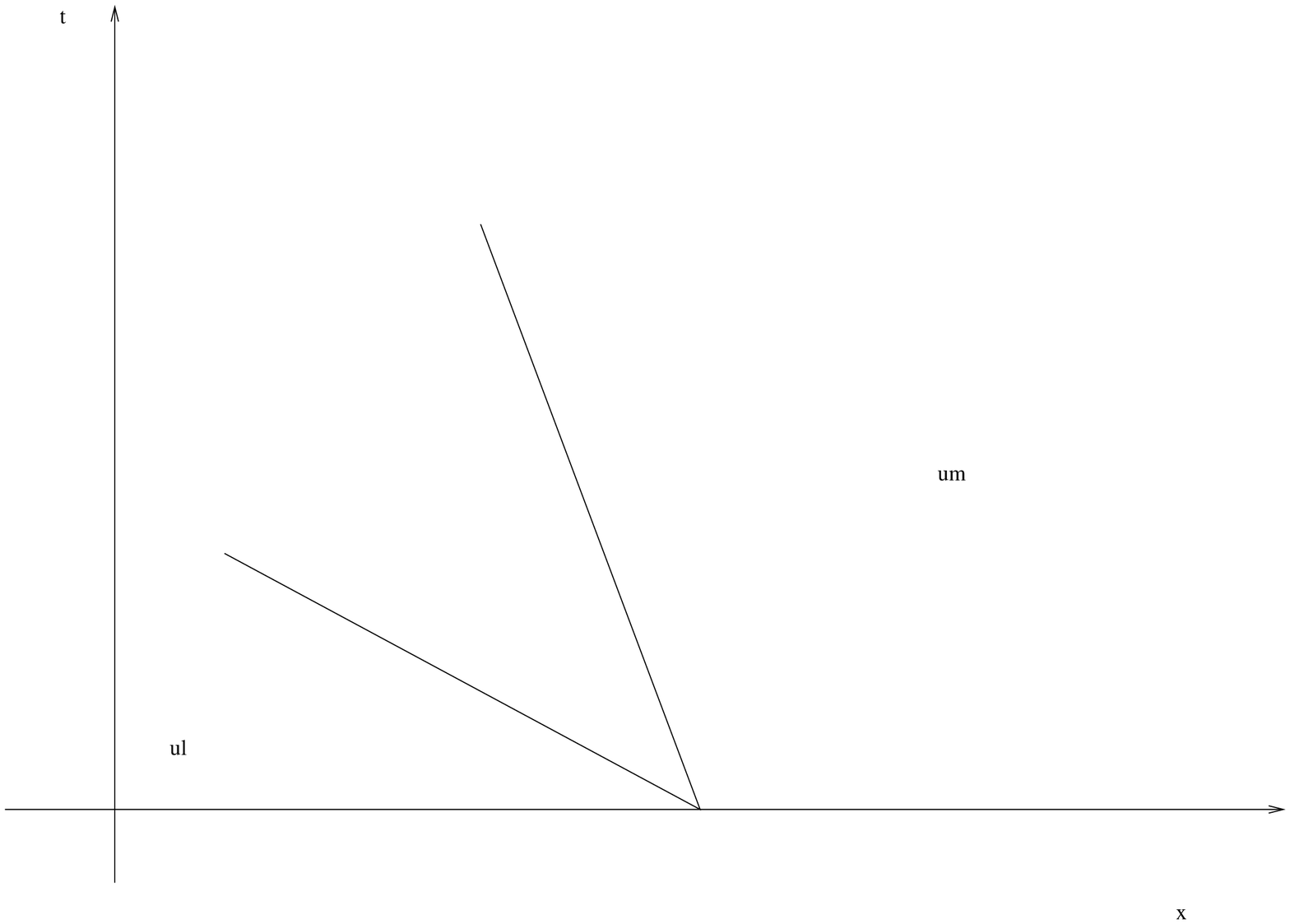}
  \end{psfrags}
  \begin{psfrags}
    \psfrag{t}{$t$} \psfrag{x}{$x$}
    \psfrag{ur}{$\!\!\!\scriptstyle(\rho^r,\eta^r)$}
    \psfrag{um}{$\!\!\!\scriptstyle(\rho^m,\eta^m)$}
    \includegraphics[width=4cm]{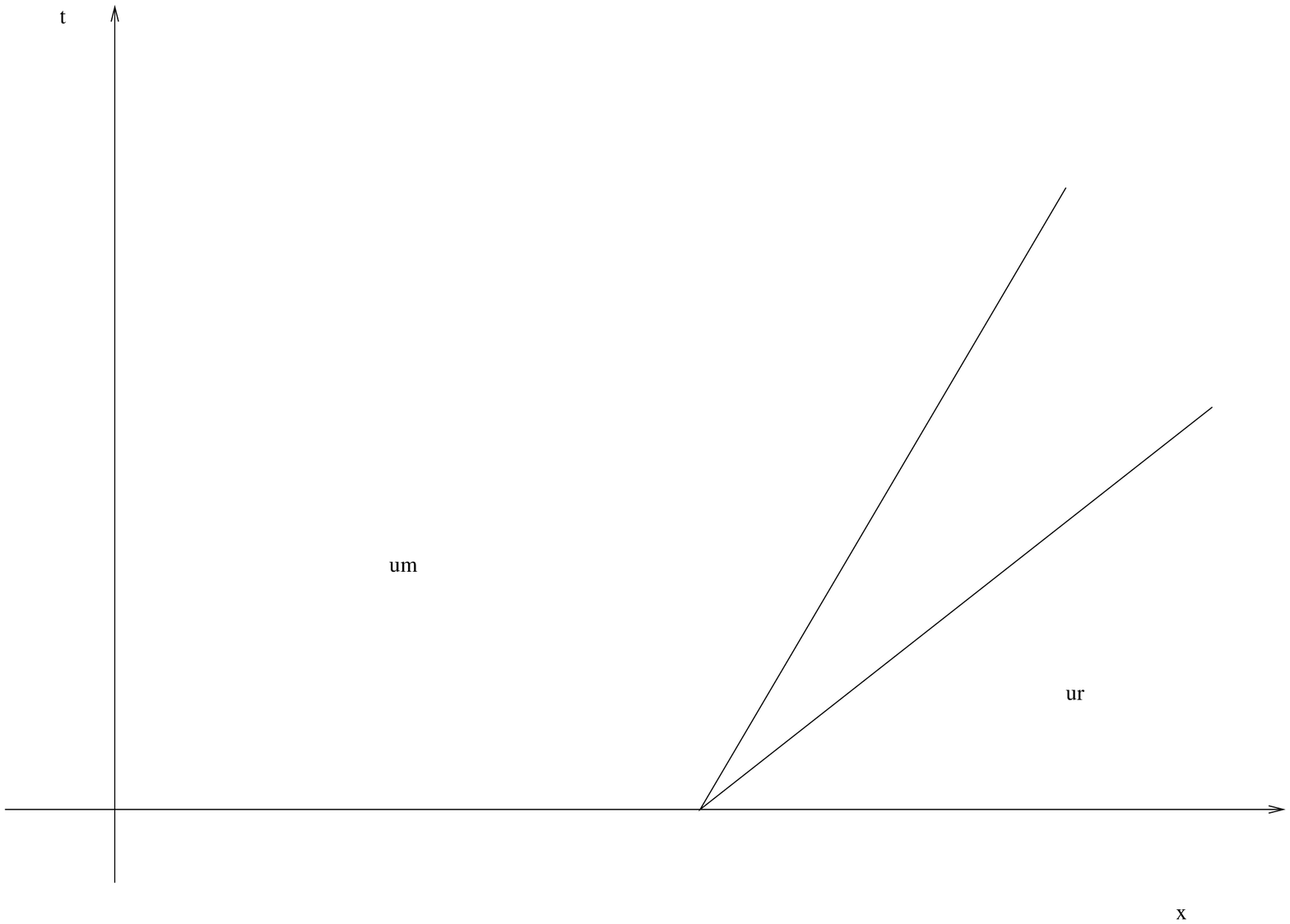}
  \end{psfrags}
  \begin{psfrags}
    \psfrag{t}{$t$} \psfrag{x}{$x$}
    \psfrag{ur}{$\scriptstyle(\rho^r,\eta^r)$}
    \psfrag{um}{$\scriptstyle(\rho^m,\eta^m)$}
    \psfrag{ul}{$\scriptstyle(\rho^l,\eta^l)$}
    \includegraphics[width=4cm]{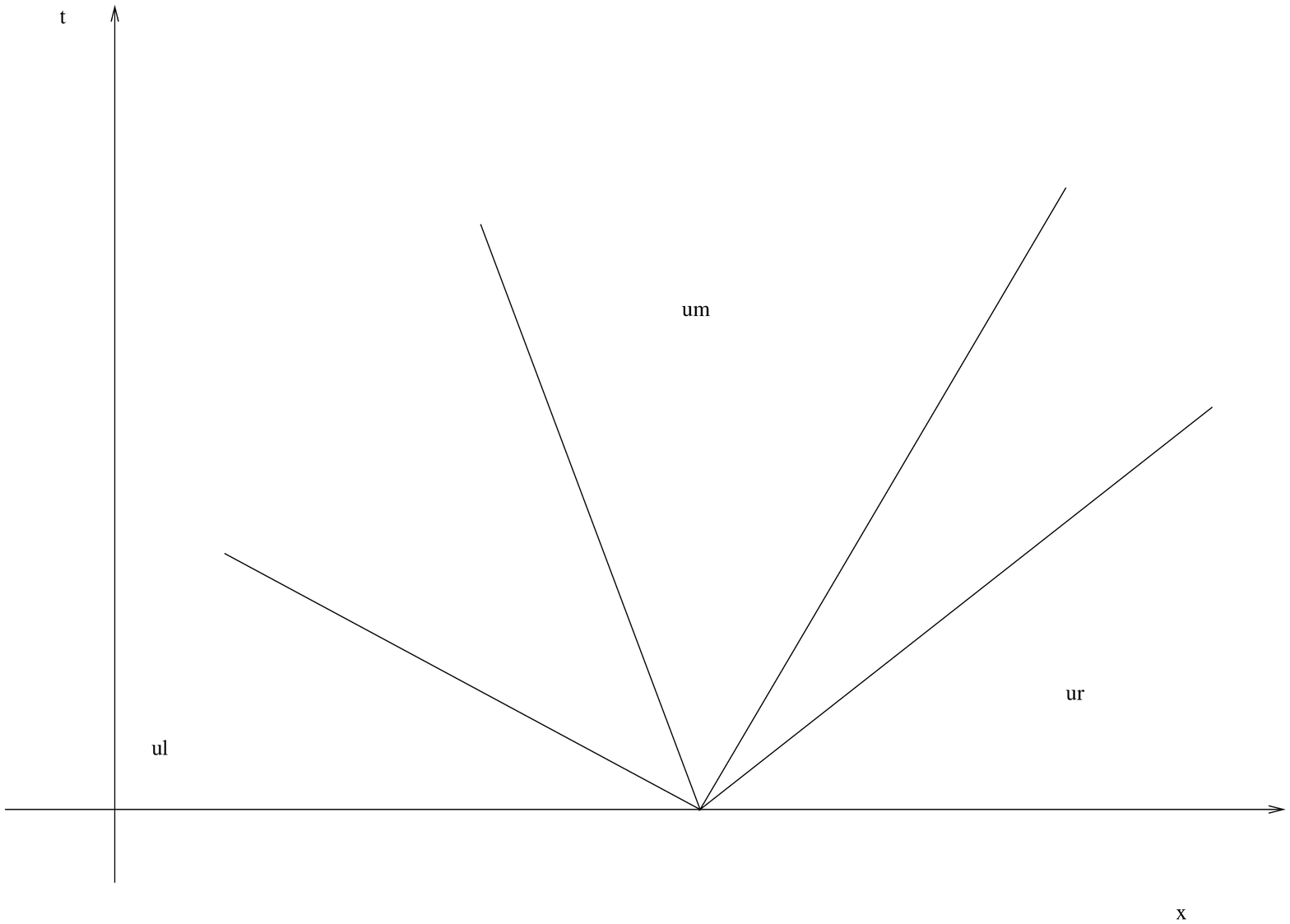}
  \end{psfrags}
  \caption{The conditions~\textbf{(C1)} and~\textbf{(C2)}.}
\end{figure}
Condition~\textbf{(C2)} is the vice-versa.

\medskip

The next result characterizes the Riemann Solver defined above.

\begin{proposition}
  \label{prop:RS}
  Let the assumptions~\textbf{\ref{it:h1}.}, \textbf{\ref{it:h2}.}
  and~\textbf{\ref{it:h3}.} hold. The Riemann Solver $\mathcal{R}$
  defined in Theorem~\ref{thm:RP} enjoys the following three
  conditions
  \begin{enumerate}[1.]
  \item It is consistent in the sense of
    Definition~\ref{def:consistent}.
  \item If $(\rho^l,\eta^l), (\rho^r,\eta^r) \in F$, then $\mathcal{R}
    \left((\rho^l,\eta^l), (\rho^r,\eta^r) \right)$ is the standard
    solution to the linear system
    \begin{equation}
      \label{eq:F}
      \left\{
        \begin{array}{l}
          \partial_t \rho +
          \partial_x \left( \rho\, V_{\max} \right) = 0
          \\
          \partial_t \eta +
          \partial_x \left( \eta V_{\max}\right) = 0,
        \end{array}
      \right.
    \end{equation}
  \item If $(\rho^l,\eta^l) \in F \cup C$ and $(\rho^r,\eta^r) \in C$,
    then $\mathcal{R} \left((\rho^l,\eta^l), (\rho^r,\eta^r) \right)$
    is the standard Lax solution to
    \begin{equation}
      \label{eq:C}
      \left\{
        \begin{array}{l}
          \partial_t \rho +
          \partial_x \left( \eta \, \psi(\rho) \right) = 0
          \\
          \partial_t \eta +
          \partial_x \left(
            \frac{\eta^{2}}{\rho} \, \psi (\rho)
          \right)
          =
          0 \,.
        \end{array}
      \right.
    \end{equation}
  \end{enumerate}
  Moreover, the conditions~\textbf{(C1)}, 2. and 3. uniquely
  characterize the Riemann Solver $\mathcal{R}$.
\end{proposition}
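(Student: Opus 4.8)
The plan is to exploit the fact that on each of the two phases the system~(\ref{eq:Modeleta}) degenerates to a well understood object, and then to patch the two descriptions along $F\cap C$ by means of the consistency relations. First I would record the structure of the two restricted systems. On $F$ one has $v\equiv V_{\max}$, so~(\ref{eq:F}) is the decoupled linear system $\partial_t\rho+V_{\max}\partial_x\rho=0$, $\partial_t\eta+V_{\max}\partial_x\eta=0$, with the double characteristic speed $V_{\max}$, whose unique Riemann solution is a single jump of both components travelling at speed $V_{\max}$. Comparing this with case~(1) of Theorem~\ref{thm:RP} yields statement~2 immediately. On $C$ one has $v=(\eta/\rho)\,\psi(\rho)$; setting $w=\eta/\rho$ and computing in the $(\rho,w)$ variables, the system~(\ref{eq:C}) is of Aw--Rascle type with characteristic speeds $\lambda_1=w\,\frac{d}{d\rho}\!\left(\rho\,\psi(\rho)\right)$ and $\lambda_2=w\,\psi(\rho)=v$. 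Since $\psi'\le0$ one has $\lambda_1\le\lambda_2$, the $2$-field is linearly degenerate with Riemann invariant $v$, and the $1$-field is genuinely nonlinear precisely where $\frac{d^2}{d\rho^2}(\rho\,\psi)<0$, by hypothesis~\textbf{\ref{it:h2}.}, with Riemann invariant $w$.

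Statement~3 then becomes an identification. The standard Lax solver of~(\ref{eq:C}) joins $(\rho^l,\eta^l)$ to $(\rho^r,\eta^r)$ by a $1$-wave along $w=w^l$ followed by a $2$-contact along $v=\mathrm{const}$, the intermediate state being the unique point with $\eta^m/\rho^m=\eta^l/\rho^l$ and $v(\rho^m,\eta^m)=v(\rho^r,\eta^r)$; this is exactly the middle state and the wave pattern of cases~(2) and~(4) of Theorem~\ref{thm:RP}, so I would only have to check that Lax admissibility selects shock versus rarefaction as stated and that the ordering $\lambda_1\le\lambda_2$ places the $1$-wave to the left of the $2$-contact. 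For statement~1 I would verify~\textbf{(C1)} and~\textbf{(C2)} by a case analysis on the phases of the three states: within a single phase the solver is the restriction of a genuine conservation--law solver, for which consistency is classical, so the only new verifications concern triples straddling $F\cap C$, to be settled using the explicit solutions of Theorem~\ref{thm:RP} and the monotonicity of the invariants $w$ and $v$.

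For the uniqueness assertion, let $\mathcal R'$ be any solver satisfying~\textbf{(C1)}, 2 and 3. Properties~2 and~3 already pin $\mathcal R'$ down on $F\times F$ and on $(F\cup C)\times C$, so the only data left are $(\rho^l,\eta^l)\in C$ and $(\rho^r,\eta^r)\in F$. For these I would take the intermediate state $(\rho^m,\eta^m)\in F\cap C$ supplied by Theorem~\ref{thm:RP}, observe that $\mathcal R'\!\left((\rho^l,\eta^l),(\rho^m,\eta^m)\right)$ is fixed by~3 because its right state lies in $C$, whereas $\mathcal R'\!\left((\rho^m,\eta^m),(\rho^r,\eta^r)\right)$ is fixed by~2 because both states lie in $F$, and then invoke~\textbf{(C1)} at a point $\bar x$ in the constant middle region to force $\mathcal R'=\mathcal R$ on these data as well.

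The hard part will be the phase--boundary bookkeeping. Because the flux of~(\ref{eq:Modeleta}) is only Lipschitz across $F\cap C$, the smooth eigenstructure is unavailable there, so both the identification in statement~3 when the left datum lies in $F$ and the consistency checks for triples straddling the boundary must be carried out directly: in particular one must compute the Rankine--Hugoniot speed of the phase--transition shock from the full flux $v=\min\{V_{\max},(\eta/\rho)\psi(\rho)\}$, compare it with the congested characteristic and contact speeds, and verify that the elementary waves stay correctly ordered and that no spurious intermediate wave on $F\cap C$ is created.
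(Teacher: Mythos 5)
Your overall architecture coincides with the paper's: statements 2 and 3 are read off directly from the construction in Theorem~\ref{thm:RP}, consistency is checked by a case analysis on the phases of the data, and uniqueness is obtained exactly as you describe --- split a $C$--to--$F$ datum at the middle state in $F\cap C$, note that condition 3 fixes the left sub-problem and condition 2 the right one, and glue with \textbf{(C1)}. The one place where you genuinely diverge is the part you yourself label the hard part. The paper never computes Rankine--Hugoniot speeds with the full flux $\min\left\{V_{\max},(\eta/\rho)\,\psi(\rho)\right\}$ across the phase boundary, nor does it check wave orderings there case by case. Instead it relies on the observation, made in case \textbf{4.} of the proof of Theorem~\ref{thm:RP}, that system~(\ref{eq:C}) may be considered on the whole of $F\cup C$, where it is a Temple system (the $1$-shock and $1$-rarefaction curves coincide, the $2$-field is linearly degenerate) and $F\cup C$ is invariant by~\cite[Theorem~3.2]{Hoff}. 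Hence for every pair of data with right state in $C$ --- including left states in $F$ --- the solver $\mathcal{R}$ is the restriction of one single classical, consistent Riemann solver, and all nontrivial middle states arising in \textbf{(C1)}--\textbf{(C2)} for such configurations lie in $C$; the only genuinely mixed configuration is $C$ to $F$, which is a juxtaposition of two consistent solvers separated by a speed gap, since $\lambda_1(\rho^m,\eta^m)<V_{\max}$. This makes your proposed direct phase-boundary bookkeeping unnecessary for the present proposition: statement 3 holds by construction, and to the extent that the admissibility of the $F$-to-$C$ shock with respect to the original flux of~(\ref{eq:Modeleta}) needs justification, that verification belongs to the proof of Theorem~\ref{thm:RP}, not here. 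With that simplification your plan closes and matches the paper's argument.
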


\noindent The above properties are of use, for instance, in using
model~(\ref{eq:Modeleta}) on traffic networks, according to the
techniques described in~\cite{GaravelloPiccoli}.

The next result presents the relevant qualitative properties of the
Riemann Solver defined in Theorem~\ref{thm:RP} from the point of view
of traffic.

\begin{proposition}
  \label{prop:Q}
  Let the assumptions~\textbf{\ref{it:h1}.}, \textbf{\ref{it:h2}.}
  and~\textbf{\ref{it:h3}.} hold. Then, the Riemann Solver
  $\mathcal{R}$ enjoys the following properties:
  \begin{enumerate}
  \item If the initial datum attains values in $F$, $C$, or $F \cup C$
    then, respectively, the solution attains values in $F$, $C$, or $F
    \cup C$.
  \item Traffic density and speed are uniformly bounded.
  \item Traffic speed vanishes if and only if traffic density is
    maximal.
  \item No wave in the solution to~(\ref{eq:Modeleta})--(\ref{eq:RD})
    may travel faster than traffic speed, i.e.~information is carried
    by vehicles.
  \end{enumerate}
\end{proposition}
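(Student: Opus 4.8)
The plan is to read all four assertions directly off the explicit Riemann solver of Theorem~\ref{thm:RP}, case by case, after first recording two structural facts about the congested system~\Ref{eq:C}. A direct computation of the Jacobian of its flux gives the eigenvalues
\[
  \lambda_1 = v + \rho\, w\, \psi'(\rho) = w\, \frac{d}{d\rho}(\rho\,\psi(\rho)), \qquad \lambda_2 = v = w\, \psi(\rho),
\]
with $w = \eta/\rho$; moreover $w$ is a $1$--Riemann invariant and $v$ a $2$--Riemann invariant, the $2$--field being linearly degenerate so that $2$--contacts propagate with speed exactly $v$. By assumption~\textbf{\ref{it:h2}.} one has $\psi' \leq 0$, whence $\lambda_1 \leq \lambda_2 = v$ throughout $C$, and $(\rho\,\psi)'' \leq 0$, which makes the $1$--field genuinely nonlinear (and linearly degenerate exactly where the second derivative vanishes). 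I will reuse these facts repeatedly.

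For the invariance of the phases (property~1) I treat the three claims separately. When the datum lies in $F$, case~(1) of Theorem~\ref{thm:RP} shows that the solution takes only the two values $(\rho^l,\eta^l),(\rho^r,\eta^r) \in F$, so $F$ is invariant. When the datum lies in $C$, case~(2) exhibits the solution as a $1$--wave joined to a $2$--contact: the $1$--wave keeps $w$ fixed and moves $\rho$ monotonically between $\rho^l$ and $\rho^m$, and since $\psi$ is nonincreasing and $\check w > V_{\max}$ (assumption~\textbf{\ref{it:h3}.}) the set $\{\rho : w\,\psi(\rho) \leq V_{\max}\}$ is a subinterval $[\rho_w, R]$ of $[0,R]$ containing both endpoints, so the whole $1$--wave stays in $C$; the $2$--contact keeps $v = w\,\psi(\rho)$ equal to $v(\rho^r,\eta^r) \leq V_{\max}$, hence also stays in $C$. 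Finally, $F \cup C$ is the entire admissible box $[0,R] \times [\check w, \hat w]$ in the $(\rho,w)$ plane, because every state satisfies $w\,\psi(\rho) \geq V_{\max}$ or $w\,\psi(\rho) \leq V_{\max}$; since each Riemann solution takes values in this box, the third claim is automatic.

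Property~2 is then immediate: by property~1 the solution ranges in $[0,R]\times[\check w,\hat w]$, so $\rho \leq R$ and, from $v = \min\{V_{\max}, w\,\psi(\rho)\} \leq V_{\max}$, the speed is bounded by $V_{\max}$, both bounds being independent of the data. For property~3 note that $v = 0$ forces $w\,\psi(\rho) = 0$; since $w \geq \check w > V_{\max} > 0$ this means $\psi(\rho) = 0$. It remains to see that $\psi$ vanishes only at $\rho = R$: the map $g(\rho) = \rho\,\psi(\rho)$ is concave with $g(0) = g(R) = 0$, hence nonnegative, and if $g(\rho_0) = 0$ at some interior $\rho_0$, then writing $\rho_0$ as a convex combination of any $a \in (0,\rho_0)$ and $R$ and invoking concavity forces $g(a) = 0$, so $g \equiv 0$ on $(0,\rho_0)$, contradicting $\psi(0) = 1$. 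Thus $\psi(\rho) > 0$ for $\rho < R$ and $\psi(R) = 0$, giving $v = 0 \iff \rho = R$.

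Property~4 is where the genuine checking lies, and I expect it to be the main obstacle, since the speed bound must be verified uniformly over all four Riemann configurations and both wave families. Every wave produced by Theorem~\ref{thm:RP} is of one of three types, and I bound each by the local traffic speed. The $F$--contact of case~(1) moves at $V_{\max}$, which equals $v$ on $F$. A $2$--contact moves at its constant speed $v$. A $1$--rarefaction carries speeds $\lambda_1 = v + \rho\,w\,\psi'(\rho) \leq v$ at every point of the fan, by $\psi' \leq 0$. For a $1$--shock, admissibility together with the concavity of $\rho \mapsto w\,\rho\,\psi(\rho)$ gives $\rho^- < \rho^+$, and Rankine--Hugoniot makes the shock speed the slope of a chord of this concave map through the origin; such chord slopes are dominated by the secant-from-origin slopes $w\,\psi(\rho^\pm) = v^\pm$, so the speed is at most $v$ on either side. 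Assembling the three types across the four cases shows no wave exceeds the traffic speed, i.e.\ information is carried by the vehicles; this last step, and the interval argument for the $1$--wave in property~1, are the only places where assumptions~\textbf{\ref{it:h2}.} and~\textbf{\ref{it:h3}.} are genuinely used.
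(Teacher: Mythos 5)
Your proof is correct and its skeleton coincides with the paper's: every property is read off the explicit Riemann solver of Theorem~\ref{thm:RP} together with the characteristic speeds of~(\ref{eq:C}). The differences lie in how the individual steps are discharged. For property~1 the paper simply points back to the invariance of $F$, $C$ and $F\cup C$ already established in the proof of Theorem~\ref{thm:RP} (via the Temple-system invariant-domain argument), whereas you re-verify invariance wave by wave; for property~3 the paper's proof is a one-line appeal to the fundamental diagram, while you supply the concavity argument showing $\psi>0$ on $\left[0,R\right[$ --- a worthwhile addition. For property~4 the paper bounds rarefactions and contacts exactly as you do ($\lambda_1\le\lambda_2=v$ since $\psi'\le0$), but it handles the only delicate discontinuity, the phase boundary with left state in $F$ and right state in $C$, through the Lax inequality $\Lambda\le\lambda_1(\rho^r,\eta^r)< v(\rho^r,\eta^r)$, whereas you bound all $1$--shocks at once by the chord-slope argument for the concave map $\rho\mapsto w\,\rho\,\psi(\rho)$. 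Your version works and even yields the two-sided bound $\sigma\le\min\{v^-,v^+\}$ for shocks internal to $C$, but it contains one small imprecision you should fix: when the left state lies in $F$, the secant-from-origin slope $w\,\psi(\rho^-)$ is \emph{not} the traffic speed there, since $v^-=V_{\max}<w\,\psi(\rho^-)$, so the estimate $\sigma\le w\,\psi(\rho^-)$ does not by itself bound the shock by the left traffic speed; the argument still closes because $\sigma\le w\,\psi(\rho^+)=v^+\le V_{\max}=v^-$, and this step should be stated explicitly.
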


\Section{Comparison with Other Macroscopic Models}
\label{sec:Comp}

This section is devoted to compare the present
model~(\ref{eq:Modeleta}) with a sample of models from the
literature. In particular, we consider differences in the number of
free parameters and functions, in the fundamental diagram and in the
qualitative structures of the solutions.  Recall that the evolution
described by model~(\ref{eq:Modeleta}) and the corresponding invariant
domain depends on the function $\psi$ and on the parameters
$V_{\max}$, $R$, $\check w$ and $\hat w$. The fundamental diagram
of~(\ref{eq:Modeleta}) is in Figure~\ref{fig:phases}, left.

\subsection{The LWR Model}

In the LWR model~(\ref{eq:LWR}), a suitable speed law has to be
selected, analogous to the choice of $\psi$
in~(\ref{eq:Modeleta}). Besides, in~(\ref{eq:Modeleta}) we also have
to set $V_{\max}$, $R$ and the two geometric positive parameters
$\check w$ and $\hat w$.

The fundamental diagram of~(\ref{eq:Modeleta}) seems to better agree
with experimental data than that of~(\ref{eq:LWR}), shown in
Figure~\ref{fig:fd}, left.
\begin{figure}[htpb]
  \centering
  \begin{psfrags}
    \psfrag{rhov}{$\rho\, v$} \psfrag{0}{$0$} \psfrag{rmax}{$R$}
    \psfrag{rho}{$\rho$}
    \includegraphics[width=4cm]{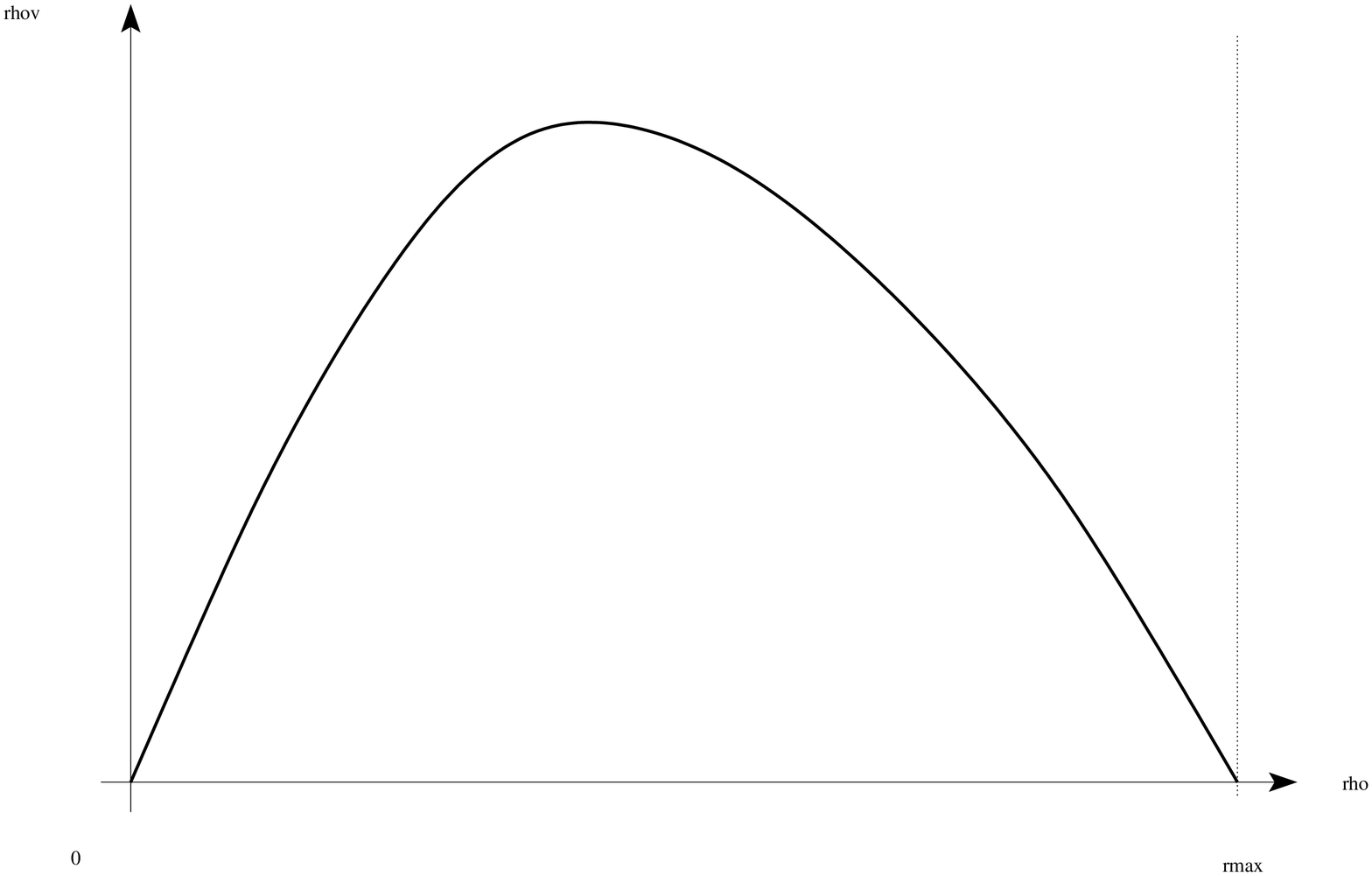}
  \end{psfrags} \begin{psfrags} \psfrag{rhov}{$\rho\, v$}
    \psfrag{0}{$0$} \psfrag{rmax}{$R$} \psfrag{rho}{$\rho$}
    \includegraphics[width=4cm]{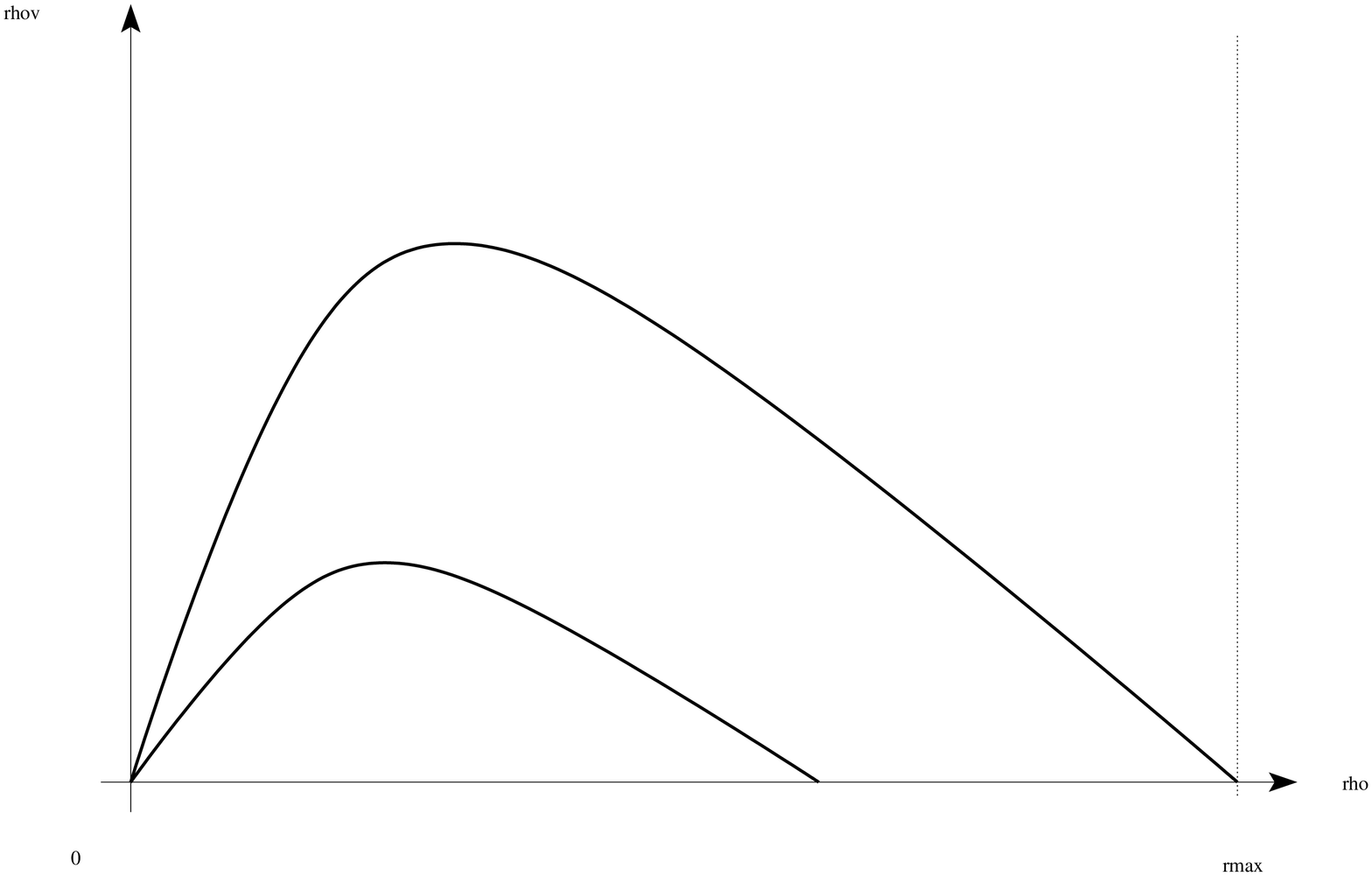}
  \end{psfrags} \begin{psfrags} \psfrag{rhov}{$\rho\, v$}
    \psfrag{0}{$0$} \psfrag{rmax}{$R$} \psfrag{rho}{$\rho$}
    \psfrag{Free}{\quad$F$} \psfrag{congested}{$C$}
    \includegraphics[width=4cm]{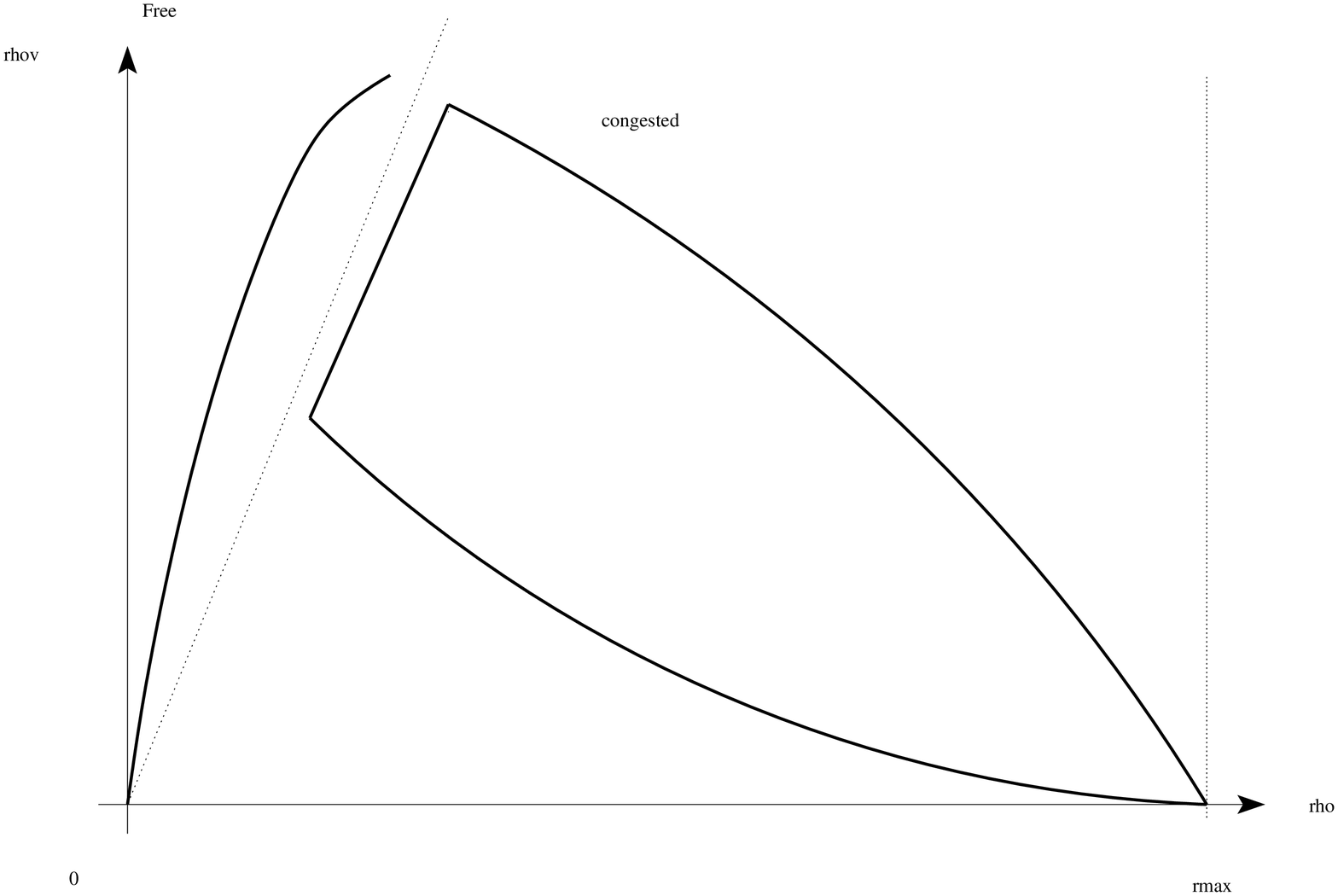}
  \end{psfrags}
  \caption{Fundamental diagrams, from left to right, of the (LWR)
    model~(\ref{eq:LWR}), of the (AR) model~(\ref{eq:AR}) and of the
    $2$-phase model~(\ref{eq:RC}).}
  \label{fig:fd}
\end{figure}
Indeed, compare Figure~\ref{fig:phases}, left with the measurements in
Figure~\ref{fig:Ex}.

As long as the data are in $F$, the solutions to~(\ref{eq:Modeleta})
are essentially the same as those of~(\ref{eq:LWR}). In the congested
phase, the solutions to~(\ref{eq:Modeleta}) obviously present a richer
structure, for they generically contain 2 waves instead of 1. In
particular, the (LWR) model~(\ref{eq:LWR}) may not describe the
\emph{"homogeneous-in-speed"} solutions, i.e.~a type of synchronized
flow, see~\cite[Section~2.2]{Kerner1} and~\cite{HelbingTreiber,
  WangJiangWuLiu}, which is described by the $2$-waves
in~(\ref{eq:Modeleta}).

Finally, note that if in~(\ref{eq:Modeleta}) the two geometric
parameters $\check w$ and $\hat w$ coincide, then we recover the
LWR~(\ref{eq:LWR}) model with $V(\rho) = \min \{ V_{\max}, \hat w\,
\psi(\rho) \}$.

\subsection{The Aw-Rascle Model}

Consider now the Aw--Rascle (AR) model
\begin{equation}
  \label{eq:AR}
  \left\{
    \begin{array}{l}
      \partial_t \rho + \partial_x \left[ \rho \, v(\rho,y) \right] =0
      \\
      \partial_t y + \partial_x \left[ y \, v(\rho,y) \right] =0
    \end{array}
  \right.
  \qquad
  v(\rho,y) = \frac{y}{\rho} - p(\rho)
\end{equation}
introduced in~\cite{AwRascle} and successively refined in several
papers, see for instan\-ce~\cite{AwKlarMaterneRascle,
  BagneriniRascle2003, Goatin2Phases, GodvikHanche, Greenberg,
  GreenbergKlarRascle, HertyKlar2003, MoutariRascle, SiebelMauser} and
the references therein.

Note that $w$ in~(\ref{eq:Modeleta}) plays a role analogous to that of
$v+p\left( \rho\right) $ in~(\ref{eq:AR}).

In the (AR) model, $R$ and the \emph{``pressure''} function need to be
selected, similarly to $R$ and $\psi$ in~(\ref{eq:Modeleta}). No other
parameter appears in~(\ref{eq:AR}), but the definition of an invariant
domain requires two parameters, with a role similar to that of $\check
w$ and $\hat w$. Indeed, an invariant domain for~(\ref{eq:AR}) is
\begin{displaymath}
  \left\{
    (\rho,y) \colon
    \rho \in [0,R] \mbox{ and }
    y \in \left[
      \rho\left(v_- + p(\rho) \right),
      \rho\left(v_- + p(\rho)
      \right)
    \right]
  \right\}
\end{displaymath}
see Figure~\ref{fig:fd}, center, and depends on the speeds $v_-$ and
$v_+$. More recent versions of~(\ref{eq:AR}) contain also a suitable
relaxation source term in the right hand side of the second equation;
in this case one more arbitrary function needs to be selected. The
original~(AR) model does not distinguish between a free and a
congested phase. However, it was extended to describe two different
phases in~\cite{Goatin2Phases}. Further comments on~(\ref{eq:AR}) are
found in~\cite{Lebacque}.

Concerning the analytical properties of the solutions, the Riemann
solver for the (AR) model suffers lack of continuous dependence at
vacuum, see~\cite[Section~4]{AwRascle}. However, existence of
solutions attaining also the vacuum state was proved
in~\cite{GodvikHanche}, while the $2$-phase construction
in~\cite{Goatin2Phases} also displays continuous dependence.

A qualitative difference between the (AR) model and the present one is
property~3.~in Proposition~\ref{prop:Q}. Indeed, solutions
to~(\ref{eq:AR}) may well have zero speed while being at a density
strictly lower than the maximal one.

\subsection{The Hyperbolic $2$-Phase Model}

Recall the model presented in~\cite{Colombo}, with a notation similar
to the present one:
\begin{equation}
  \label{eq:RC}
  \!\!\!
  \begin{array}{ll}
    \hbox{Free flow: } (\rho,q) \in F,
    &
    \hbox{Congested flow: } (\rho,q) \in C,
    \\
    \partial_t \rho +\partial_x \left[\rho \cdot v_F(\rho) \right] =0,
    \qquad
    &
    \left\{
      \begin{array}{l}
        \partial_t \rho + \partial_x \left[\rho\cdot v_C(\rho,q) \right] =0
        \\
        \partial_t q + \partial_x \left[ (q-q_*) \cdot v_C(\rho,q) \right] =0
      \end{array}
    \right.
    \\
    v_F(\rho) = \left( 1 - {\rho\over R} \right) \cdot V
    &
    v_C(\rho,q) = \left( 1 - {\rho\over R} \right) \cdot {q \over \rho}
  \end{array}
\end{equation}
the phases being defined as
\begin{displaymath}
  \begin{array}{lcl}
    F
    & = &
    \{ (\rho,q) \in [0,R] \times \reali^+ \colon
    v_f(\rho) \geq V_f ,\, q= \rho \cdot V \},
    \\[.5ex]
    C
    & = &
    \left\{ (\rho,q) \in [0,R] \times \reali^+ \colon
      v_c(\rho.q) \leq V_c ,\,
      \frac{q-q_*}{\rho} \in
      \left[
        \frac{Q_1-q_*}{R},
        \frac{Q_2-q_*}{R}
      \right]
    \right\}.
  \end{array}\vspace{2mm}
\end{displaymath}
In~(\ref{eq:RC}) no function can be selected, on the other hand the
evolution depends on the parameters $V$, $R$ and $q_*$ while the
invariant domains $F$ and $C$ depend on $V_f$, $V_c$, $Q_1$ and
$Q_2$. A geometric construction of the solutions to~(\ref{eq:RC}) in
the congested phase is in~\cite{LebacqueEtAl}.

The main difference between fundamental diagrams of~(\ref{eq:RC}), see
Figure~\ref{fig:fd}, right, and that of~(\ref{eq:Modeleta}) is
that~(\ref{eq:RC}) requires the two phases to be \emph{disconnected}:
there is a \emph{gap} between the free and the congested phase. This
restriction is necessary for the well posedness of the Riemann problem
for~(\ref{eq:RC}) and can be hardly justified on the basis of
experimental data. More recently, the global well-posedness of the
model~(\ref{eq:RC}) was proved in~\cite{ColomboGoatinPriuli}.

Note that in both models, as well as in that presented
in~\cite{Goatin2Phases}, the free phase is one dimensional, while the
congested phase is bidimensional.

The model~(\ref{eq:RC}) allows for the description of \emph{wide
  jams}, i.e.~of persistent waves in the congested phase moving at a
speed \emph{different} from that of traffic. Here, as long as
$\frac{d^2\ }{d\rho^2} \left( \rho \, \psi(\rho) \right) < 0$,
persistent phenomena can be described only through waves of the second
family, which move at the mean traffic speed. We refer
to~\cite{Lebacque} for further discussions on~(\ref{eq:RC}) and
comparisons with other macroscopic models.

\subsection{A Kinetic Model}
\label{sec:Kin}

Recall, with a notation adapted to the present case, the kinetic model
introduced in~\cite[Section~1]{BenzoniColomboGwiazda}:
\begin{equation}
  \label{eq:Kin}
  \partial_t r(t,x;w)
  +
  \partial_x \left[
    w \, r(t,x;w) \,
    \psi\left( \int_{\check w}^{\hat w} r(t,x;w') \, d w' \right)
  \right]
  =
  0
  \,.
\end{equation}
The function $\psi$ and the speed $w$ play the same role as here. The
unknown $r = r(t,x;w)$ is the probability density of vehicles having
maximal speed $w$ that at time $t$ are at point $x$.

In~(\ref{eq:Kin}) there is one function to be specified, $\psi$, as
in~(\ref{eq:Modeleta}). The parameters are $R$ (which is normalized to
$1$ in~\cite{BenzoniColomboGwiazda}), $\check w$ and $\hat w$,
similarly to~(\ref{eq:Modeleta}). Since no limit speed is there
defined, no parameter in~(\ref{eq:Kin}) has the same role as here
$V_{\max}$.

Being of a kinetic nature, there is no real equivalent to a
fundamental diagram for~(\ref{eq:Kin}).

\par From the analytical point of view, the existence of solutions
to~(\ref{eq:Kin}) has not been proved, yet.  The main result
in~\cite{BenzoniColomboGwiazda} only states that~(\ref{eq:Kin}) can be
rigorously obtained as the limit of systems of $n \times n$
conservation laws describing $n$ populations of vehicles, each
characterized by their maximal speed.

Let the measure $r$ solve~(\ref{eq:Kin}) and be such that for suitable
functions $\rho$ and $w$
\begin{equation}
  \label{eq:delta}
  r(t,x; \cdot ) = \rho(t,x) \, \delta_{w(t,x)}
\end{equation}
where $\delta$ is the usual Dirac measure. Then, formally, $(\rho,w)$
solves~(\ref{eq:Modeleta}). Indeed, for the first equation simply
substitute~(\ref{eq:delta}) in~(\ref{eq:Kin}) and integrate; for the
second equation substitute~(\ref{eq:delta}) in~(\ref{eq:Kin}),
multiply by $w$ and integrate over $[\check w, \hat w]$.

Remark that~(\ref{eq:delta}) suggests a further interpretation of the
quantity $\rho$ in~(\ref{eq:Modeleta}). Indeed, in the present model,
at $(t,x)$ vehicles of only one species are present, namely those with
maximal speed $w(t,x)$.

\Section{Connections with a Follow-The-Leader Model}
\label{sec:Micro}

Within the framework of~(\ref{eq:rhow}), a single driver starting from
$\tilde p$ at time $t=0$ follows the \emph{particle path} $p = p(t)$
that solves the Cauchy problem
\begin{equation}
  \label{eq:PP}
  \left\{
    \!\!
    \begin{array}{l}
      \dot p
      =
      v \left( \rho\left(t,p(t) \right), w\left((t,p(t)\right) \right)
      \\
      p(0) = \tilde p
    \end{array}
  \right.
  \quad
  v(\rho,w) = \min \left\{V_{\max} ,\, w \, \psi(\rho) \right\},
\end{equation}
refer to~\cite{ColomboMarson} for the well posedness of the particle
path for the LWR model (see also~\cite{AwKlarMaterneRascle}).  Recall
now that $w$ is a specific feature of every single driver,
i.e.~$w\left( t, p(t) \right) = w(0,\tilde p)$ for all $\tilde p$. On
the other hand, from a microscopic point of view, if $n$ drivers are
distributed along the road, then $\rho$ is approximated by $l /
(p_{i+1} - p_i)$, where $l$ is a standard length of a car.

We fix $L > 0$ and assume that $n$ drivers are distributed along
$[-L,L]$. Then, the natural microscopic counterpart
to~(\ref{eq:Modeleta}) is therefore the \textit{Follow-The-Leader}
(FTL) model defined by the Cauchy problem
\begin{equation}
  \label{eq:ftl}
  \left\{
    \begin{array}{l@{\qquad}rcl}
      \dot p_i
      =
      v \left( \frac{l}{p_{i+1} - p_i} , w_i\right)
      & i & = & 1, \ldots, n
      \\
      \dot p_{n+1}
      =
      V_{\max}
      \\[5pt]
      p_{i}(0)
      =
      \tilde p_{i}
      & i & = & 1, \ldots, n+1
    \end{array}
  \right.
\end{equation}
where $\tilde p_1 =-L$ and $\tilde
p_{n+1}=L-l$. Proposition~\ref{prop:Cauchy problem} shows
that~(\ref{eq:ftl}) admits a unique global solution defined for every
$t \geq 0$ and such that $p_{i+1} - p_i \geq l$ for all $t \geq 0$.

\begin{proposition}
  \label{prop:Cauchy problem}
  Let~\textbf{a.}, \textbf{b.} and~\textbf{c.} hold. Fix $L>0$. For
  any $n \in \naturali$, with $n\geq 2$, choose initial data $\tilde
  p^n_i$ for $i=1, \ldots, n$ satisfying $\tilde p^n_{i+1} - \tilde
  p^n_i \geq l$. Then, the Cauchy problem~(\ref{eq:ftl}) admits a
  unique solution $p^n_i = p^n_i(t)$, for $i=1, \ldots, n+1$, defined
  for all $t \geq 0$ and satisfying $p^n_{i+1}(t) - p^n_i(t) \geq l$
  for all $t \geq 0$ and for $i=1, \ldots, n$ .
\end{proposition}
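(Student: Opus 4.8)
The plan is to regard \Ref{eq:ftl} as an autonomous Cauchy problem $\dot p = F(p)$ for $p = (p_1,\dots,p_{n+1})$ on the open set $\mathcal{D} = \{p \colon p_{i+1} - p_i > 0,\ i = 1,\dots,n\}$, and to argue in three steps: local well posedness, an a priori lower bound on the gaps, and global continuation. Throughout I write $\Delta_i = p_{i+1} - p_i$ for the gaps and $\rho_i = l/\Delta_i$ for the corresponding densities, the markers $w_i \in [\check w,\hat w]$ being fixed constants, and I recall that in this section $R$ is normalized to $1$, so that $\Delta_i \geq l$ is equivalent to $\rho_i \leq R$ and $\Delta_i = l$ to $\rho_i = R$.

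First I would settle local existence and uniqueness. On $\mathcal{D}$ the map $p \mapsto \rho_i = l/\Delta_i$ is smooth, while $v(\rho,w) = \min\{V_{\max}, w\,\psi(\rho)\}$ is Lipschitz in $\rho$, being the minimum of a constant and the $\C2$ function $w\,\psi$ granted by~\textbf{\ref{it:h2}.}. Hence $F$ is locally Lipschitz on $\mathcal{D}$, and the Cauchy--Lipschitz theorem provides a unique maximal solution on an interval $[0,T^*)$ with $T^* \in (0,+\infty]$. The datum belongs to the closed set $K = \{p \colon \Delta_i \geq l,\ i = 1,\dots,n\} \subset \mathcal{D}$, on which every $\rho_i$ lies in $[0,R]$ and $\psi(\rho_i)$ is defined.

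The heart of the matter is the bound $\Delta_i(t) \geq l$, equivalently the invariance of $K$. It rests on two observations. Since $\psi \geq 0$ and $V_{\max}, w_i > 0$, every velocity satisfies $0 \leq \dot p_i = v(\rho_i,w_i) \leq V_{\max}$, the leader moving at exactly $\dot p_{n+1} = V_{\max} > 0$. Moreover, as soon as a gap attains the threshold $\Delta_i = l$ one has $\rho_i = R$ and therefore $\dot p_i = \min\{V_{\max}, w_i\,\psi(R)\} = \min\{V_{\max},0\} = 0$ by~\textbf{\ref{it:h2}.}. Consequently, on any face $\{\Delta_i = l\}$ of $K$ the corresponding gap is nondecreasing, $\dot\Delta_i = \dot p_{i+1} - \dot p_i = \dot p_{i+1} \geq 0$: this is exactly the Nagumo subtangentiality condition for the convex set $K$, so the unique solution started in $K$ remains in $K$ for all $t \in [0,T^*)$. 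The same conclusion can be reached by hand: if $\tau$ were the first time some $\Delta_{i_0}$ reached $l$ from above, then $\dot\Delta_{i_0}(\tau) \leq 0$ since $\tau$ is a one-sided minimum, while $\dot\Delta_{i_0}(\tau) = \dot p_{i_0+1}(\tau) \geq 0$; hence $\dot p_{i_0+1}(\tau) = 0$, which forces either $i_0 + 1 = n+1$, impossible because $\dot p_{n+1} = V_{\max} > 0$, or $\Delta_{i_0+1}(\tau) = l$, so that the equality propagates to the right up to the leader and again meets the contradiction $\dot p_{n+1} = V_{\max} > 0$.

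I expect this invariance step to be the main obstacle, precisely because at a contact time several gaps may equal $l$ at once and $F$ may be only tangent, rather than strictly inward pointing, to $K$; what unlocks the argument is the prescribed leader speed $\dot p_{n+1} = V_{\max} > 0$ together with $\psi(R) = 0$, which forbid a whole congested block from collapsing. Granting the bound, global existence follows by excluding finite-time blow up: the estimate $0 \leq \dot p_i \leq V_{\max}$ gives $|p_i(t)| \leq |\tilde p_i| + V_{\max}\,t$, so positions remain bounded on $[0,T^*)$; meanwhile $\Delta_i \geq l$ keeps the trajectory uniformly away from $\partial\mathcal{D}$ and $\Delta_i \leq p_{n+1} - p_1$ keeps the gaps bounded, so $p(t)$ stays in a compact subset of $\mathcal{D}$ as $t \uparrow T^*$. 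Maximality then forces $T^* = +\infty$, yielding the global solution with $p^n_{i+1}(t) - p^n_i(t) \geq l$ for all $t \geq 0$, as claimed.
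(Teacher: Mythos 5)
Your overall strategy is sound and you have isolated the right mechanism — $\psi(R)=0$ freezes a follower whose gap has shrunk to $l$, while the vehicle ahead never moves backwards, so the gap cannot decrease further — but there is one genuine flaw in your first step. You assert that $F$ is locally Lipschitz on $\mathcal{D}=\{p\colon p_{i+1}-p_i>0\}$ and invoke Cauchy--Lipschitz there. As written this is not correct: by hypothesis \textbf{\ref{it:h2}.} the function $\psi$ is defined only on $[0,R]$, so $v(l/\Delta_i,w_i)$ makes no sense when $\Delta_i<l$; the field $F$ lives only on the closed set $K$, not on the open set $\mathcal{D}$, as you yourself note one sentence later. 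Since the admissible data include points of $\partial K$ (gaps exactly equal to $l$ are allowed), the classical Cauchy--Lipschitz theorem does not apply directly. You must either extend the velocity law beyond $\rho=R$ — this is precisely what the paper does with the truncation $u_i$ in~(\ref{eq:ftl3}), which is Lipschitz across $\rho=R$ exactly because $v(R,w_i)=0$ — or replace Cauchy--Lipschitz by a viability theorem for Lipschitz fields defined only on the closed convex set $K$. Your Nagumo step is compatible with the second option, but then the first paragraph must be reworded; as it stands, the ``unique maximal solution on $[0,T^*)$'' is produced by a theorem whose hypotheses are not met.

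Once that is repaired, your route genuinely differs from the paper's and is a legitimate alternative. The paper first extends each $u_i$ to a globally bounded, globally Lipschitz function, so that global existence and uniqueness for the modified system~(\ref{eq:CP}) are immediate and no continuation argument is needed; only afterwards does it show $\Delta_i\geq l$, by an integral identity: if $\Delta_i<l$ on some interval $\left]\bar t,t^*\right]$ then $u_i=0$ there, so $p_i$ is frozen while $p_{i+1}$ is nondecreasing, contradicting $\Delta_i<l$ and proving that the modified solution in fact solves~(\ref{eq:ftl}). You instead prove forward invariance of $K$ via subtangentiality at the boundary (correctly verified, also at corners where several faces are active) and then exclude finite-time blow-up by compactness. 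Both arguments hinge on the same two facts, $v(R,w)=0$ and $\dot p_{i+1}\geq 0$; the paper's extension buys a shorter existence proof at the price of working with a modified system, while your version stays with the original field but needs the viability machinery. Your ``by hand'' first-contact chain is a nice sanity check but is delicate when several gaps equal $l$ simultaneously or already at $t=0$, so the Nagumo formulation (or the paper's integral argument) should be the one you actually rely on.
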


\noindent The proof is postponed to Section~\ref{sec:Tech}.

Our next aim is to rigorously show that in the limit $n \to +\infty$
with $n \, l = \mbox{constant}>0$, the microscopic model
in~(\ref{eq:ftl}) yields the macroscopic one
in~(\ref{eq:Modeleta}). Given the position $p^i$ of every single
vehicle and its maximal speed $w_i$, for $i=1, \ldots, n+1$, the
macroscopic variables $\rho, w$ are given by
\begin{displaymath}
  \rho(x)
  =
  \sum_{i=1}^n \frac{l}{p^n_{i+1} - p^n_i} \;
  \caratt{[p^n_i, p^n_{i+1}[} (x)
  \quad \mbox{ and } \quad
  w(x)
  =
  \sum_{i=1}^n
  w^n_i \; \caratt{[p^n_i, p^n_{i+1}[} (x)
  \,.
\end{displaymath}
Note that necessarily $p^n_{i+1} - p^n_i \geq l$.

On the contrary, given $( \rho, w) \in (\L1 \cap \BV) (\reali; [0,1]
\times [\check w, \hat w])$, with $\supp \rho$, $\supp w \subseteq
[-L,L]$, we reconstruct a microscopic description defining $l = \left(
  \int_{\reali} \rho(x) \, dx \right) /n$ and
\begin{eqnarray*}
  p^n_{n+1} & = & L-l
  \\
  p^n_{i}
  & = &
  \max \left\{
    p \in [-L,L] \colon \int_{p}^{p_{i+1}} \rho(x) \, dx = l
  \right\}
  \quad \mbox{ for } i = 1, \ldots, n
  \\
  w^n_i
  & = &
  w(p^n_i+)
  \quad \mbox{ for } i = 1, \ldots, n+1 \,.
\end{eqnarray*}
\noindent Note that $\int_{\reali}\rho(x) \, dx = nl>0$. Now we are
able to rigorously show that, as the number of vehicles increases to
infinity, the microscopic model in~(\ref{eq:ftl}) yields the
macroscopic one in~(\ref{eq:Modeleta}).

\begin{proposition}
  \label{prop:limit}
  Let~\textbf{a.}, \textbf{b.} and~\textbf{c.} hold. Fix $T >
  0$. Choose $(\tilde \rho, \tilde w) \in (\L1 \cap \BV) (\reali;
  [0,1] \times[\check w, \hat w])$ with $\supp \tilde\rho$, $\supp
  \tilde w \subseteq [-L,L]$. Construct the initial data for the
  microscopic model setting $l = \left( \int_{\reali} \tilde \rho(x)
    \, dx \right) /n$ and
  \begin{eqnarray*}
    \tilde p^n_{n+1} & = & L-l
    \\
    \tilde p^n_{i}
    & = &
    \max \left\{
      p \in [-L,L] \colon \int_{p}^{\tilde p_{i+1}} \tilde \rho(x) \, dx = l
    \right\}
    \quad \mbox{ for } i = 1, \ldots, n
    \\
    \tilde w^n_i
    & = &
    \tilde w(p^n_i+)
    \quad \mbox{ for } i = 1, \ldots, n+1 \,.  
  \end{eqnarray*}
  Let $p^n_i(t)$, for $i=1, \ldots, n$, be the corresponding solution
  to~(\ref{eq:ftl}). Define
  \begin{eqnarray}
    \label{eq:rhon}
    \rho^n(t,x)
    & = &
    \sum_{i=1}^n \frac{l}{p^n_{i+1}(t) - p^n_i(t)} \;
    \caratt{[p^n_i(t), p^n_{i+1}(t)[} (x)
    \\
    \label{eq:wn}
    w^n(t,x)
    & = &
    \sum_{i=1}^n
    \tilde w^n_i \; \caratt{[p^n_i(t), p^n_{i+1}(t)[} (x)
    \,.
  \end{eqnarray}
  If there exists a pair $(\rho,w) \in \L\infty \bigl( [0,T]; \L1
  (\reali; [0,1] \times [\check w, \hat w] \bigr)$ such that
  \begin{displaymath}
    \lim_{n \to +\infty} (\rho^n,w^n)(t,x) = (\rho,w)(t,x)
    \qquad p.a.e.
  \end{displaymath}
  then, the pair $(\rho,\rho w)$ is a weak solution
  to~(\ref{eq:Modeleta}) with initial datum $(\tilde \rho, \tilde
  \rho\tilde w)$.
\end{proposition}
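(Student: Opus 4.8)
The goal is to show that the limit pair $(\rho, \rho w)$ is a weak solution to the macroscopic system~\Ref{eq:Modeleta}, i.e.\ that for all test functions $\phi \in \Cc\infty([0,T) \times \reali)$ the weak formulations
\begin{displaymath}
  \int_0^T\!\!\int_\reali \left( \rho\, \partial_t \phi + \rho\, v(\rho,\rho w)\, \partial_x \phi \right) dx\, dt + \int_\reali \tilde\rho\, \phi(0,\cdot)\, dx = 0
\end{displaymath}
and its $\eta = \rho w$ analogue hold. The natural strategy is to write, for each fixed $n$, the \emph{discrete} weak identities satisfied exactly by the piecewise-constant functions $(\rho^n, w^n)$ defined in~\Ref{eq:rhon}--\Ref{eq:wn}, and then to pass to the limit $n \to +\infty$ using the hypothesized pointwise a.e.\ convergence together with dominated convergence. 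First I would fix notation: set $\eta^n = \rho^n w^n$ and $v^n_i = v\bigl( l/(p^n_{i+1}-p^n_i),\, w^n_i \bigr) = \dot p^n_i$, so that by Proposition~\ref{prop:Cauchy problem} every gap $p^n_{i+1}-p^n_i$ stays $\geq l$, giving the uniform bound $0 \leq \rho^n \leq 1$, while $w^n_i \in [\check w, \hat w]$ and $0 \leq v^n \leq V_{\max}$ by construction. These uniform bounds furnish the integrable domination needed later.

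The core computation is to verify that $(\rho^n, \eta^n)$ satisfies the weak formulation \emph{up to an error vanishing as $n\to\infty$}. On each interval $[p^n_i(t), p^n_{i+1}(t)[$ the density equals $l/(p^n_{i+1}-p^n_i)$ and $w^n$ equals the constant $w^n_i$; the only time dependence enters through the moving endpoints $p^n_i(t)$, which travel at speed $\dot p^n_i = v^n_i$. The key algebraic fact is that mass is conserved cell-by-cell: the quantity $\int_{p^n_i}^{p^n_{i+1}} \rho^n\, dx = l$ is constant in time, and likewise $\int_{p^n_i}^{p^n_{i+1}} \eta^n\, dx = l\, w^n_i$ is constant because $w^n_i$ is frozen along the path (this is the microscopic shadow of the second equation of~\Ref{eq:rhow}, the Lagrangian-marker transport). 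I would therefore compute $\frac{d}{dt}\int_\reali \rho^n\, \phi\, dx$ by differentiating the piecewise-constant representation, using that each cell transports a fixed mass $l$ at the interface speeds $\dot p^n_i = v^n_i$. Writing out $\frac{d}{dt}\int_\reali \rho^n \phi\, dx = \int_\reali \rho^n \partial_t\phi\, dx + (\text{boundary terms from moving endpoints})$ and integrating in $t$ over $[0,T]$, the boundary terms telescope into a discrete flux-difference sum $\sum_i \bigl( v^n_i \phi(t,p^n_i) - v^n_{i+1}\phi(t,p^n_{i+1})\bigr)(\text{mass})$ that is an $O(l)$-accurate Riemann-sum approximation of $\int \rho^n v(\rho^n,\eta^n)\partial_x\phi\, dx$. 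The same manipulation applied to $\eta^n$ uses the frozen values $w^n_i$ and yields the flux $\eta^n v(\rho^n,\eta^n)$.

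The main obstacle, and the step needing genuine care, is controlling the discretization error between these discrete flux sums and the continuous integrals. Because $\phi$ is smooth with compact support, on each cell of width $\geq l$ one has $\phi(t,x) - \phi(t,p^n_i) = O(l)$ uniformly, and the number of cells in $\supp\phi$ is $O(1/l)$, so the accumulated error is $O(l) = O(1/n) \to 0$; here the uniform speed bound $v^n \leq V_{\max}$ and the total-mass bound $nl = \int\tilde\rho\,dx$ keep the sums controlled. A subtle point is that $v(\rho,\eta)$ is only $\C{0,1}$ (the $\min$ in~\Ref{eq:Modeleta} is not $\C1$), so I must avoid any argument requiring smoothness of the flux; fortunately the Riemann-sum argument needs only continuity of $v$ and the a.e.\ convergence of $(\rho^n,w^n)$, both of which are available — the flux $\rho^n v(\rho^n, w^n)$ converges a.e.\ to $\rho\, v(\rho, \rho w)$ by continuity of $v$, and is dominated by the integrable constant $V_{\max}$ on the fixed compact $\supp\phi$. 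Finally the initial-datum term: by the construction of $\tilde p^n_i$ the functions $\rho^n(0,\cdot)$ and $\eta^n(0,\cdot)$ are precisely the piecewise-constant reconstructions of $(\tilde\rho, \tilde\rho\tilde w)$, which converge in $\L1$ to $(\tilde\rho, \tilde\rho\tilde w)$ as $n\to\infty$ by a standard $\BV$ estimate, so $\int_\reali \rho^n(0,\cdot)\phi(0,\cdot)\,dx \to \int_\reali \tilde\rho\,\phi(0,\cdot)\,dx$. Passing to the limit in every term then delivers the two weak identities, completing the proof.
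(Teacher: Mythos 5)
Your overall strategy is the same as the paper's: insert the piecewise-constant empirical functions \Ref{eq:rhon}--\Ref{eq:wn} into the weak formulation, use the identity $\dot p^n_i = v\bigl(l/(p^n_{i+1}-p^n_i), w^n_i\bigr)$ to turn the integrand on each cell into the total derivative $\frac{d}{dt}\phi\bigl(t,p^n_i(t)\bigr)$ plus an error, let the main terms cancel against the initial-datum terms, and pass to the limit in the residual by dominated convergence using $0\le\rho^n\le1$, $v\le V_{\max}$ and the mere continuity of $v$, exactly as you note. The treatment of the second component via the frozen Lagrangian markers $w^n_i$ is also as in the paper.

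The one step that does not survive scrutiny as written is your quadrature error estimate. You claim that on each cell $\phi(t,x)-\phi\bigl(t,p^n_i(t)\bigr)=O(l)$ because the cell has width $\geq l$; but the widths $p^n_{i+1}-p^n_i$ are bounded \emph{below} by $l$, not above (they are large wherever the density is small, and may grow in time), so the pointwise error on a cell is only of order $p^n_{i+1}(t)-p^n_i(t)$. Moreover, even granting $O(l)$ per cell, your own accounting ($O(l)$ per cell times $O(1/l)$ cells) yields $O(1)$, not $O(l)$, so the argument as stated does not show that the error vanishes. The correct bookkeeping --- the one the paper uses --- is that each cell enters with the weight $\frac{l}{p^n_{i+1}-p^n_i}\int_{p^n_i}^{p^n_{i+1}}(\cdots)\,dx$, i.e.\ with total mass exactly $l$, so its contribution to the error is $l$ times a quantity of order $p^n_{i+1}-p^n_i$; summing over $i$ and using that the cells are disjoint with $\sum_i\bigl(p^n_{i+1}(t)-p^n_i(t)\bigr)\le 2L+V_{\max}T$ for $t\le T$, the total error is bounded by a constant times $l\,(2L+V_{\max}T)\to0$. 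With this correction your proof closes and coincides with the paper's.
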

The proof is postponed to Section~\ref{sec:Tech}.

\Section{Technical Details}
\label{sec:Tech}

We first prove an elementary consequence of our
assumption~\textbf{\ref{it:h2}}.

\begin{lemma}
  \label{lem:psi}
  Let $\psi$ satisfy~\textbf{\ref{it:h2}}. Then,
  \begin{displaymath}
    \exists\,\bar\rho \in \left[0,R\right[
    \mbox{such that }
    \left\{
      \begin{array}{l}
        \psi \mbox{ is constant on } [0, \bar\rho],
        \\
        \psi \mbox{ is strictly decreasing on } [\bar\rho, R].
      \end{array}
    \right.
  \end{displaymath}
\end{lemma}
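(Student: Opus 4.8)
The plan is to treat the two conclusions separately, the first being almost immediate and the second carrying the real work. Set $f(\rho)=\rho\,\psi(\rho)$, so that assumption~\textbf{\ref{it:h2}.} says exactly that $f$ is concave on $[0,R]$ (it is $\C{2}$ with $f''\le0$) and that $\psi$ is non-increasing. First I would put $\bar\rho=\max\{\rho\in[0,R]\colon\psi(\rho)=1\}$. Since $\psi$ is continuous, non-increasing and $\psi(0)=1$, the level set $\{\psi=1\}$ is a down-set, hence exactly the closed interval $[0,\bar\rho]$, on which $\psi\equiv1$; and because $\psi(R)=0$ we get $\bar\rho\in\left[0,R\right[$. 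This settles the first assertion, the only ingredient being monotonicity.

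The heart of the lemma is the strict decrease on $[\bar\rho,R]$, and the main obstacle is that concavity of $f$ together with $\psi'\le0$ do \emph{not} on their own exclude an interior plateau of $\psi$; the missing ingredient is the normalisation $\psi(0)=1$, which I would exploit geometrically. I argue by contradiction: if $\psi$ fails to be strictly decreasing on $[\bar\rho,R]$, then by monotonicity there are $\bar\rho\le\rho_1<\rho_2\le R$ with $\psi\equiv c$ on $[\rho_1,\rho_2]$ for some constant $c$. We may assume $\rho_1>0$, for if $\rho_1=0$ then $c=\psi(0)=1$ and $\psi(\rho_2)=1$ with $\rho_2>\bar\rho$, already contradicting the maximality of $\bar\rho$. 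Now $f(\rho)=c\,\rho$ on $[\rho_1,\rho_2]$, so on $[0,\rho_2]$ the graph of $f$ meets, at the interior point $\rho_1$, the chord joining $(0,f(0))=(0,0)$ to $(\rho_2,f(\rho_2))=(\rho_2,c\rho_2)$, namely the line $\ell(\rho)=c\,\rho$ through the origin.

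The key step is a rigidity property of concave functions. The difference $g=f-\ell$ is concave, satisfies $g(0)=g(\rho_2)=0$, hence $g\ge0$ on $[0,\rho_2]$ (a concave function lies above its chords), and it vanishes at the interior point $\rho_1$. A nonnegative concave function that vanishes at an interior point of an interval on whose endpoints it also vanishes must vanish identically; I would prove this by an elementary chord comparison, since any point carrying a strictly positive value would force, by concavity, a strictly positive value at $\rho_1$. Therefore $f\equiv\ell$ on $[0,\rho_2]$, so $\psi\equiv c$ on $(0,\rho_2]$ and, by continuity, $c=\psi(0)=1$; but then $\psi(\rho_2)=1$ with $\rho_2>\bar\rho$, contradicting the maximality in the definition of $\bar\rho$. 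This contradiction yields strict monotonicity on $[\bar\rho,R]$. I expect the concavity-rigidity step to be the only delicate point, the remainder being routine manipulation of continuity and monotonicity.
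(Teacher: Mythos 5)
Your proof is correct and follows essentially the same route as the paper: both arguments reduce a failure of strict monotonicity to a plateau $\psi\equiv c$ on some $[\rho_1,\rho_2]$, observe that $\rho\,\psi(\rho)$ is then the line $c\rho$ there, and use concavity of $\rho\mapsto\rho\,\psi(\rho)$ together with the normalisation $\psi(0)=1$ to force $c=1$, contradicting the choice of $\bar\rho$. The only difference is cosmetic: your chord-rigidity argument for the concave function $f-\ell$ replaces the paper's terser observation that $q'(0)=\psi(0)>c$ is incompatible with the concavity of $q(\rho)=\rho\,\psi(\rho)$, and your write-up makes the definition of $\bar\rho$ and the endpoint cases explicit.
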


\begin{proof}
  Call $q(\rho) = \rho\, \psi(\rho)$.  If $\psi$ is strictly monotone,
  then $\bar\rho =0$ and the proof is completed. Otherwise, assume
  that $\psi(\rho_1) = \psi(\rho_2) = c$ for suitable $\rho_1,\rho_2
  \in \left]0, R\right]$ and $\rho_1 \neq \rho_2$. Then,
  by~\textbf{\ref{it:h2}.}, for all $\rho \in [\rho_1,\rho_2]$ we have
  $\psi(\rho) = c$ and $q(\rho) = c\rho$. If $\psi(0) = c$, then the
  proof is completed. Otherwise, note that $q'(0) = \psi(0) >c$
  contradicts the convexity of $q$.
\end{proof}

\begin{corollary}
  \label{lem:Unique}
  Let $\psi$ satisfy~\textbf{\ref{it:h2}.}
  and~\textbf{\ref{it:h3}}. Then,
  \begin{displaymath}
    \bar\rho
    <
    \min
    \left\{
      \rho \in [0, R] \colon \exists w \in [\check w, \hat
      w] \mbox{ such that } (\rho,w) \in C
    \right\} \,.
  \end{displaymath}
\end{corollary}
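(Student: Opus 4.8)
The plan is to read off the defining inequality for the congested phase and combine it with the flat part of $\psi$ near the origin supplied by Lemma~\ref{lem:psi}.

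First I would record that, by the definition~\Ref{eq:phC} of $C$ and the formula $v(\rho,\rho w)=\min\{V_{\max},\,w\,\psi(\rho)\}$, a pair $(\rho,w)\in[0,R]\times[\check w,\hat w]$ lies in $C$ precisely when $\min\{V_{\max},\,w\,\psi(\rho)\}=w\,\psi(\rho)$, i.e.\ when
\[
  w\,\psi(\rho)\le V_{\max}.
\]
I would also check that the set $S=\left\{\rho\in[0,R]\colon \exists\,w\in[\check w,\hat w]\text{ such that }(\rho,w)\in C\right\}$, whose minimum appears in the statement, is non-empty and closed, so that the minimum is indeed attained: taking $\rho=R$ gives $\psi(R)=0$, hence $w\,\psi(R)=0\le V_{\max}$ and $(R,w)\in C$ for every admissible $w$; and $S$ is the projection onto the $\rho$--axis of the compact set $C$, hence compact.

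Next, by Lemma~\ref{lem:psi} the function $\psi$ is constant on $[0,\bar\rho]$, and since $\psi(0)=1$ we have $\psi\equiv 1$ there. Therefore, for any $\rho\in[0,\bar\rho]$ and any $w\in[\check w,\hat w]$,
\[
  w\,\psi(\rho)=w\ge\check w>V_{\max},
\]
the last inequality being exactly hypothesis~\textbf{\ref{it:h3}.}. Hence $(\rho,w)\notin C$ for every such $w$, which means $\rho\notin S$. Thus $S\cap[0,\bar\rho]=\emptyset$, so every element of $S$ is strictly larger than $\bar\rho$; in particular $\min S>\bar\rho$, which is the claim.

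There is essentially no obstacle here: the entire content is that $\psi$ has not yet begun to decrease for $\rho\le\bar\rho$, so on that density range no driver is slowed below $V_{\max}$ and the congested phase is empty. The only point demanding a line of care is the well-definedness of the minimum, which I dispatch above via the endpoint $\rho=R$ and the closedness of $C$.
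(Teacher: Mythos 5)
Your argument is correct: the characterization $(\rho,w)\in C \iff w\,\psi(\rho)\le V_{\max}$, combined with $\psi\equiv 1$ on $[0,\bar\rho]$ (from Lemma~\ref{lem:psi} and $\psi(0)=1$) and $\check w>V_{\max}$ from~\textbf{\ref{it:h3}.}, gives exactly the claim, and your verification that the minimum is attained (via $\rho=R$ and compactness of $C$) is a sensible extra precaution. The paper declares this proof ``immediate'' and omits it, and what you have written is precisely the intended one-line argument, spelled out.
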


\noindent The proof is immediate and, hence, omitted.

In the sequel, for the basic definitions concerning the standard
theory of conservation laws we refer to~\cite{BressanLectureNotes}.

\begin{proofof}{Theorem~\ref{thm:RP}}
  We consider different cases, depending on the phase of the
  data~(\ref{eq:RD}).

  \smallskip
  \noindent\textbf{1.} \quad $(\rho^l,\eta^l), (\rho^r, \eta^r) \in
  F$.

  In this case, (\ref{eq:Modeleta}) reduces to the degenerate linear
  system~(\ref{eq:F}) so that the
  problem~(\ref{eq:Modeleta})--(\ref{eq:RD}) is solved
  by~(\ref{eq:solF}). Remark, for later use, that the characteristic
  speed is $\lambda^F = V_{\max}$.

  \smallskip
  \noindent\textbf{2.} \quad $(\rho^l,\eta^l), (\rho^r, \eta^r) \in
  C$.

  Now, $v(\rho, \eta) = \eta \, \psi(\rho) / \rho$. We show that $C$
  is invariant with respect to the $2\times 2$ system of conservation
  laws~(\ref{eq:C}).  To this aim, we compute the eigenvalues, right
  eigenvectors and the Lax curves in $C$:
  \begin{displaymath}
    \begin{array}{rcl@{\quad}rcl}
      \lambda_{1} (\rho, \eta)
      & = &
      \eta\, \psi'(\rho) + v(\rho, \eta)
      &
      \lambda_{2} (\rho, \eta)
      & = &
      v(\rho, \eta)
      \\[5pt]
      r_{1} (\rho, \eta)
      & = &
      \left[
        \begin{array}{c}
          -\rho
          \\
          -\eta
        \end{array}
      \right]
      &
      r_{2} (\rho, \eta)
      & = &
      \left[
        \begin{array}{c}
          1
          \\
          \eta\left( \frac{1}{\rho}-\frac{\psi'(\rho) }{\psi(\rho) }\right)
        \end{array}%
      \right]
      \\
      \nabla \lambda_1 \cdot r_1
      & = &
      \displaystyle
      -\frac{d^2\ }{d\rho^2} \left[ \rho\, \psi(\rho) \right]
      &
      \nabla \lambda_2 \cdot r_2
      & = &
      0
      \\
      \mathcal{L}_1(\rho;\rho_o,\eta_o)
      & = &
      \displaystyle
      \eta_o \frac{\rho}{\rho_o}
      &
      \mathcal{L}_2(\rho;\rho_o,\eta_o)
      & = &
      \displaystyle
      \frac{\rho \, v(\rho_o, \eta_o)}{\psi(\rho)},
      \; \rho_o < R.\!
    \end{array}
  \end{displaymath}
  When $\rho_o = R$, the $2$--Lax curve through $(\rho_o, \eta_o)$ is
  the segment $\rho=R$, $\eta \in [R \check w, R \hat w]$.

  Shock and rarefaction curves of the first characteristic family
  coincide by~\cite[Lemma~2.1]{BagneriniColomboCorli}, see
  also~\cite[Problem~1, Chapter~5]{BressanLectureNotes}. The second
  characteristic field is linearly degenerate. Hence, (\ref{eq:C}) is
  a Temple system and $C$ is invariant, since its boundary consists of
  Lax curves, see~\cite[Theorem~3.2]{Hoff}.

  Thus, the solution to~(\ref{eq:Modeleta}) is as described
  in~\textsl{(2)} and attains values in $C$.

  \smallskip
  \noindent\textbf{3.} \quad $(\rho^l,\eta^l) \in C$, $(\rho^r,
  \eta^r) \in F$.

  Let $\rho^m$ satisfy $\psi(\rho^m) = V_{\max} \rho^r / \eta^r$. Note
  that such $\rho^m$ exists in $\left]0,1\right[$
  by~\textbf{\ref{it:h2}} and~\textbf{\ref{it:h3}.}, it is unique by
  Corollary~\ref{lem:Unique}.  Define $\eta^m = (\rho^m/\rho^r)
  \eta^r$ and note that $(\rho^l,\eta^l)$, $(\rho^m,\eta^m)$ are
  connected by a $1$--rarefaction wave of~(\ref{eq:C}) having maximal
  speed of propagation $\lambda_1(\rho^m, \eta^m) < V_{\max}$. Hence,
  a linear wave, solution to~(\ref{eq:F}), can be juxtaposed
  connecting $(\rho^m, \eta^m)$ to $(\rho^l, \eta^l)$ and the solution
  to~(\ref{eq:Modeleta}) is as described at~\textsl{(3)}.

  \smallskip
  \noindent\textbf{4.} \quad $(\rho^l,\eta^l) \in F$, $(\rho^r,
  \eta^r) \in C$ (see Figure~\ref{fig:phase}).

  Note that system~(\ref{eq:C}) can be considered on the whole of $F
  \cup C$. Also this set is invariant for~(\ref{eq:C}),
  by~\cite[Theorem~3.2]{Hoff}. Then, in this case, we let
  $(\rho,\eta)$ be the standard Lax solution to~(\ref{eq:C}), as
  described at~\textsl{(4)}.
\end{proofof}

\begin{proofof}{Proposition~\ref{prop:RS}}
  We consider different cases depending on the phase of the
  data~(\ref{eq:RD}).
 
  If $(\rho^l,\eta^l), (\rho^r,\eta^r) \in F$, then $\mathcal{R}
  \left((\rho^l,\eta^l), (\rho^r,\eta^r) \right)$ coincides with the
  Riemann solver of a linear system, which
  satisfies~\textbf{(C1)}. Condition~\textbf{(C2)} is immediate since
  no nontrivial middle state is available.

  Similarly, if $(\rho^l,\eta^l), (\rho^r,\eta^r) \in C$, then
  $\mathcal{R} \left((\rho^l,\eta^l), (\rho^r,\eta^r) \right)$
  coincides with the standard Riemann solver of a $2\times 2$ system,
  which is consistent. The consistency of $\mathcal{R}$ then follows
  by the invariance of $C$, by~\textbf{2.} in the proof of
  Theorem~\ref{thm:RP}.

  By the same argument, also the case $(\rho^l,\eta^l)\in F$ and
  $(\rho^r,\eta^r) \in C$ is proved. Indeed, in ~\textbf{(C2)}, note
  that the only possible nontrivial middles states are in $C$.

  Finally, if $(\rho^l,\eta^l) \in C$ and $(\rho^r,\eta^r) \in F$,
  then $\mathcal{R} \left((\rho^l,\eta^l), (\rho^r,\eta^r) \right)$
  takes values in $F \cup C$ and is the juxtaposition of 2 consistent
  Riemann problems, hence~\textbf{(C1)}
  holds. Concerning~\textbf{(C2)}, note that the the only possible
  nontrivial middles states are in $C$, and~\textbf{(C2)} follows by
  the consistency of the standard Riemann solver for~(\ref{eq:C}).

  Thus 1. is proved. Assertions~ 2. and 3. are immediate consequences
  of the construction of Theorem~\ref{thm:RP}.

  Assume now that $\mathcal{R}$ satisfies~2 and~3. Then all Riemann
  problems with data $(\rho^l,\eta^l), (\rho^r,\eta^r) \in F$,
  $(\rho^l,\eta^l) \in F$, $(\rho^r,\eta^r) \in C$ and
  $(\rho^l,\eta^l), (\rho^r,\eta^r) \in C$ are uniquely solved. The
  solution to Riemann problems with $(\rho^l,\eta^l) \in C$ and
  $(\rho^r,\eta^r) \in F$ is then uniquely constructed
  through~\textbf{(C1)}.
\end{proofof}

\begin{proofof}{Proposition~\ref{prop:Q}}
  Consider the different statements separately.

  1.\quad The invariance of $F$, $C$ and $F \cup C$ is shown in the
  proof of Theorem~\ref{thm:RP}.

  2.\quad By the invariance of $F \cup C$, it is sufficient to observe
  that on the compact set $F \cup C$, the density $\rho$, respectively
  the speed $v$, is uniformly bounded by $R$, respectively $V_{\max}$.

  3.\quad It is immediate, see for instance Figure~\ref{fig:phases},
  left.

  4.\quad In phase $C$ we have
  \begin{displaymath}
    \lambda_{1}(\rho,\eta)
    =
    \eta \, \psi'(\rho) + v(\rho, \eta)
    \leq
    v(\rho, \eta)
    \quad \mbox{ and } \quad
    \lambda_{2} (\rho,\eta)
    \leq
    v(\rho, \eta).
  \end{displaymath}
  In the free phase the wave speed is $V_{\max} = v(\rho,\eta)$. The
  only case left is that of a phase boundary with left state in $F$
  and right state, say $(\rho^r,\eta^r)$, in $C$. Then, the speed
  $\Lambda$ of the phase boundary clearly satisfies $\Lambda \leq
  \lambda_1(\rho^r,\eta^r) < v(\rho^r,\eta^r)$.
\end{proofof}

\begin{proofof}{Proposition~\ref{prop:Cauchy problem}}
  Note first that the functions $\rho \to v( \rho, w_{i}) $
  in~(\ref{eq:ftl}) are uniformly bounded and Lipschitz continuous for
  $i = 1, \ldots, n$. We extend them to functions with the same
  properties and defined on $\left[0, +\infty\right[$ setting
  \begin{equation}
    \label{eq:ftl3}
    u_{i} (\rho)
    =
    \left\{
      \begin{array}{ll}
        V_{\max} \quad \quad if \qquad  \rho <0
        \\
        v\left(\rho,w_{i}\right) \quad if \qquad \rho \in\left[ 0,1\right]
        \\
        0 \qquad \qquad  if \qquad \rho >1.
      \end{array}
    \right.
  \end{equation}
  We also note that, for $i = 1, \ldots, n$, the composite
  applications $\delta \to u_{i} ( l / \delta )$, can be extended to
  uniformly bounded and Lipschitz continuous functions on $\left[
    0,+\infty\right[$. Now we consider the Cauchy problem
  \begin{equation}
    \label{eq:CP}
    \left\{
      \begin{array}{l@{\qquad}rcl}
        \dot p^n_i
        =
        u_{i}\left( \frac{l}{p^n_{i+1} - p^n_i} \right)
        & i & = & 1, \ldots, n
        \\
        \dot p^n_{n+1}
        =
        V_{\max}
        \\[5pt]
        p^n_{i}(0)
        =
        \tilde p_{i}
        & i & = & 1, \ldots, n+1 \,.
      \end{array}
    \right.
  \end{equation}
  Note that $\tilde p^n_i$, for $i=1, \ldots, n+1$ are defined in
  Proposition~\ref{prop:limit} and satisfy the condition $\tilde
  p^n_{i+1} - \tilde p^n_i \geq l > 0$, for every $i=1,\ldots,n$.

  By the standard ODE theory, there exists a $\C1$ solution
  $p_{i}^{n}$ defined as long as $p^n_{i+1} - p^n_i > 0$ for all
  $i = 1, \ldots, n$.  We now prove that in fact $p^n_{i+1}(t) -
  p^n_i(t) \geq l$ for every $t \geq 0$. To this aim we assume by
  contradiction that there exist positive $\bar t$ and $t^{*}$, with
  $\bar t<t^{*}$, such that $p^n_{i+1}(\bar t) - p^n_i(\bar t) = l$
  and $p^n_{i+1}(t) -p^n_i(t) < l$ for every $t \in \left] \bar t,
    t^{*} \right]$. Then,
  \begin{displaymath}
    p^n_i(t)
    =
    p^n_i(\bar t) + \int_{\bar t}^{t} \! \dot p_i (s) \, ds
    =
    p^n_i(\bar t) + \int_{\bar t}^{t}
    u_{i} \left( \frac{l}{p^n_{i+1}(s) - p^n_i(s)} \right) ds
    =
    p^n_i(\bar t) .
  \end{displaymath}
  This yields a contradiction, since for every $t \in \left] \bar t,
    t^{*} \right]$
  \begin{eqnarray*}
    p^n_{i+1}(t) -p^n_i(t)
    \geq
    p^n_{i+1}(\bar t) - p^n_i( \bar t)
    =
    l\,,
  \end{eqnarray*}
  completing the proof.
\end{proofof}

\begin{proofof}{Proposition~\ref{prop:limit}}
  Recall first the definition of weak solution to~(\ref{eq:Modeleta}):
  for all $\phi \in \Cc\infty$, setting $v(\rho,w) = \min \{V_{\max},
  \, w \, \psi(\rho) \}$,
  \begin{displaymath}
    \int_0^T \int_{\reali}
    \left(
      \left[\!\!
        \begin{array}{c}
          \rho \\ \rho \, w
        \end{array}
        \!\!\right]
      \partial_t \phi
      +
      \left[\!\!
        \begin{array}{c}
          \rho \, v(\rho,w)
          \\
          \rho \, w \, v(\rho,w)
        \end{array}
        \!\!\right]
      \partial_x \phi
    \right)
    \! dx \, dt
    +
    \int_{\reali}
    \left[\!\!
      \begin{array}{c}
        \tilde \rho \\ \tilde \rho \, \tilde w
      \end{array}
      \!\!\right]
    \phi(0,x) \, dx
    =
    0
  \end{displaymath}
  and consider the two components separately.

  Below, $\O$ denotes a constant that uniformly bounds from above the
  modulus of $\phi$ and all its derivatives up to the second order.
  Insert first~(\ref{eq:rhon}) in the above equality and obtain:
  \begin{eqnarray*} 
    I^n\!\!
    & := &
    \int_0^T \int_{\reali}
    \left(
      \rho^n
      \partial_t \phi
      +
      \rho^n \, v(\rho^n,w^n) \,
      \partial_x \phi
    \right)
    \, dx \, dt
    +
    \int_{\reali}
    \tilde \rho \, \phi(0,x) \, dx
    \\
    & = &
    \! \sum_{i=1}^n
    \!\int_0^T \!\!
    \frac{l}{p^n_{i+1}(t) - p^n_i(t)}
    \! \int_{p^n_i(t)}^{p^n_{i+1}(t)}
    \!\! \left[
      \partial_t \phi
      +
      v \! \left(\frac{l}{p^n_{i+1}(t) - p^n_i(t)}, w^n_i \right)
      \! \partial_x \phi
    \right]
    \! dt
    \\
    & &
    \qquad
    +
    \int_{\reali}
    \rho^n(0,x) \, \phi(0,x) \, dx
    +
    \int_{\reali}
    \left( \tilde \rho -\rho^n(0,x) \right)\, \phi(0,x) \, dx
    \\
    & = &
    \sum_{i=1}^n
    \int_0^T
    \frac{l}{p^n_{i+1}(t) - p^n_i(t)}
    \int_{p^n_i(t)}^{p^n_{i+1}(t)} \!\!
    \left(
      \partial_t \phi(t,x) + \dot p^n_i(t) \partial_x \phi(t,x)
    \right)
    dx \, dt
    \\
    & & \qquad
    +
    \sum_{i=1}^n
    \frac{l}{\tilde p^n_{i+1} - \tilde p^n_i}
    \int_{\tilde p_i}^{\tilde p_{i+1}} \! \! \phi(0,x) dx
    +
    \int_{\reali}
    \left( \tilde \rho -\rho^n(0,x) \right) \phi(0,x) \, dx.
  \end{eqnarray*}
  Approximating $\phi\left( t,x\right) $ with $\phi\left(
    t,p^n_i(t)\right) $ for every $x$ in $[p^n_i(t), p^n_{i+1}(t)]$,
  we obtain:
  \begin{eqnarray*} 
    I^n
    & = &
    \sum_{i=1}^n
    \int_0^T
    \frac{l}{p^n_{i+1}(t) - p^n_i(t)}
    \int_{p^n_i(t)}^{p^n_{i+1}(t)}
    \frac{d}{dt}\phi\left(t,p^n_i(t) \right) \, dx \, dt
    \\
    & & \quad
    +
    \sum_{i=1}^n
    \int_0^T
    \frac{l}{p^n_{i+1}(t) - p^n_i(t)}
    \int_{p^n_i(t)}^{p^n_{i+1}(t)}
    \O \left( p^n_{i+1}(t) - p^n_i(t) \right) \, dx \, dt
    \\
    & & \quad
    +
    \sum_{i=1}^n
    \frac{l}{\tilde p^n_{i+1} - \tilde p^n_i}
    \int_{\tilde p_i}^{\tilde p_{i+1}} \! \phi(0,x) \, dx
    +
    \int_{\reali} \!
    \left( \tilde \rho -\rho^n(0,x) \right) \phi(0,x) \, dx
    \\
    & = &
    l
    \sum_{i=1}^n
    \int_0^T
    \frac{d}{dt}\phi\left(t,p^n_i(t) \right) dt
    +
    \Delta x
    \sum_{i=1}^n
    \int_0^T \!
    \O \left( p^n_{i+1}(t) - p^n_i(t) \right) dx \, dt
    \\
    & & \quad
    +
    \sum_{i=1}^n
    \frac{l}{\tilde p^n_{i+1} - \tilde p^n_i}
    \int_{\tilde p_i}^{\tilde p_{i+1}} \phi(0,x) dx
    +
    \int_{\reali} \!
    \left( \tilde \rho -\rho^n(0,x) \right) \phi(0,x) \, dx
    \\
    & = &
    \sum_{i=1}^n  
    \frac{l}{\tilde p^n_{i+1} - \tilde p^n_i}
    \int_{\tilde p_i}^{\tilde p_{i+1}}
    \left[\phi(0,x) - \phi(0,\tilde p^n_i) \right] \, dx
    \\
    & & \quad
    +
    \O \, l \left( p^n_{n+1}(T) - p^n_1(T) \right)
    +
    \int_{\reali}
    \left( \tilde \rho -\rho^n(0,x) \right)\, \phi(0,x) \, dx
    \\
    & = &
    \O \, l \left(2L+V_{\max}T\right)
    +
    \int_{\reali}
    \left( \tilde \rho -\rho^n(0,x) \right)\, \phi(0,x) \, dx
  \end{eqnarray*}
  and both terms in the latter quantity clearly vanish as $n \to
  +\infty$.

  The computations related to the other component are entirely
  similar, since $w$ is constant along any set of the form
  \begin{displaymath}
    \left\{
      (t,x) \in [0,T] \times \reali \colon
      x \in \left[p^n_i(t), p^n_{i+1}(t)\right[
    \right\}
  \end{displaymath}
  and the proof is completed.
\end{proofof}

\begin{remark}
  \label{rem:appendix}
  System~(\ref{eq:NonCons}) is not in conservation form. As far as
  smooth solutions are concerned, it is equivalent to infinitely many
  $2\times 2$ systems of conservation laws. Indeed, introduce a
  strictly monotone function $f \in \C2 \left([\check w, \hat w];
    \left]0, +\infty\right[ \right)$. Then, elementary computations
  show that, as long as smooth solutions are concerned,
  system~(\ref{eq:NonCons}) is equivalent to
  \begin{equation}
    \label{eq:General}
    \left\{
      \begin{array}{l}
        \partial_t \rho
        +
        \partial_x  \left( \rho \, \psi(\rho) \, g(\eta/\rho)\right)
        =0
        \\
        \partial_t \eta
        +
        \partial_x  \left( \eta \, \psi(\rho) \, g(\eta/\rho)\right)
        =0
      \end{array}
    \right.
    \qquad
    \mbox{ where }
    \qquad
    \begin{array}{l}
      \eta = \rho \, f(w) \mbox{ and}
      \\
      g\left(f(w)\right)=w  
    \end{array}
  \end{equation}
  Clearly, different choices of $f$ yield different weak solutions
  to~(\ref{eq:General}), but they are all equivalent when written in
  terms of $\rho$ and $w$.
\end{remark}

\noindent\textbf{Acknowledgment.} The first and second author
acknowledge the warm hospitality of the Laboratoire de Mathematiques
J.A.~Dieudonne of the Universit\`e Sophia-Antipolis de Nice, where
part of this work was completed.

{\small{

    \bibliographystyle{abbrv}

    \bibliography{model} }}

\end{document}